\newcommand*{\medcap}{\mathbin{\scalebox{1.5}{\ensuremath{\cap}}}}%
\tikzset{square matrix/.style={
    matrix of nodes,
    column sep=-\pgflinewidth, row sep=-\pgflinewidth,
    nodes={draw,
      minimum height=4.5pt,
      anchor=center,
      text width=4.5pt,
      align=center,
      inner sep=0pt
    },
  },
  square matrix/.default=1.2cm
}
\newtheorem{theo}{Theorem}[section]
\newtheorem{lemma}[theo]{Lemma}
\newtheorem{corol}[theo]{Corollary}
\newtheorem{conj}[theo]{Conjecture}
\newtheorem{prop}[theo]{Proposition}
\newtheorem{remark}[theo]{Remark}
\begin{document}

\title{$3$-tuple total domination number of rook's graphs}

\author{Behnaz Pahlavsay}
\address{Behnaz Pahlavsay, Department of Mathematics, Hokkaido University, Kita 10, Nishi 8, Kita-Ku, Sapporo 060-0810, Japan.}
\email{pahlavsay@math.sci.hokudai.ac.jp}
\author{Elisa Palezzato}
\address{Elisa Palezzato, Department of Mathematics, Hokkaido University, Kita 10, Nishi 8, Kita-Ku, Sapporo 060-0810, Japan.}
\email{palezzato@math.sci.hokudai.ac.jp}
\author{Michele Torielli}
\address{Michele Torielli, Department of Mathematics, GI-CoRE GSB, Hokkaido University, Kita 10, Nishi 8, Kita-Ku, Sapporo 060-0810, Japan.}
\email{torielli@math.sci.hokudai.ac.jp}

\date{\today}

\begin{abstract}
A $k$-tuple total dominating set ($k$TDS) of a graph $G$ is a set $S$ of vertices in which every vertex in $G$ is adjacent to at least $k$ vertices in $S$. The minimum size of a $k$TDS is called the
$k$-tuple total dominating number and it is  denoted by $\gamma_{\times k,t}(G)$.  We give a constructive proof of a general formula for $\gamma_{\times 3, t}(K_n \Box K_m)$.
%
%
\end{abstract}
\maketitle

\section{Introduction}

Domination is well-studied in graph theory and the literature on this subject has been surveyed and detailed in the two books by Haynes, Hedetniemi, and Slater~\cite{HHS5, HHS6}. Among the many variations of domination, the one relevant to this paper is $k$-tuple total domination, which was introduced by Henning and Kazemi \cite{HK8} as a generalization of \cite{HH3}. Throughout this paper, we use standard notation for graphs, see for example \cite{bondy2008graph}.  All graphs considered here are finite, undirected, and simple.

%
For a graph $G=(V_G,E_G)$ and $k \geq 1$, a set $S \subseteq V_G$ is called a $k$-\emph{tuple total dominating set} ($k$TDS) if every vertex $v \in V$ has at least $k$ neighbours in $S$, i.e., $|N_G(v)\cap S| \geq k$.  The \emph{$k$-tuple total domination number}, which we denote by $\gamma_{\times k,t}(G)$, is the minimum cardinality of a $k$TDS of $G$.  
We use min-$k$TDS to refer to  $k$TDSs of minimum size.

An immediate necessary condition for a graph to have a $k$-tuple total dominating set is that every vertex must have at least $k$ neighbours.  For example, for $k \geq 1$, a $k$-regular graph $G=(V_G,E_G)$ has only one $k$-tuple total dominating set, namely $V_G$ itself.

In the history of domination problems, a lot of work has been done to study the class of cartesian product of graphs and in particular of rook's graphs. 
Given two graphs $G$ and $H$, their \emph{Cartesian product} $G \Box H$ is the graph with vertex set $V_G\times V_H$ where two vertices $(u_{1},v_{1})$ and $(u_{2},v_{2})$ are adjacent if and only if either $u_{1}=u_{2}$ and $v_{1}v_{2}\in E_H$ or $v_{1}=v_{2}$ and $u_{1}u_{2}\in E_G$.  For more information on the cartesian product of graphs see \cite{IK}.  We will be particularly interested in the case when $K_n \Box K_m$, where $K_n$ is the complete graph on $n$ vertices. Such graph is known as the $n \times m$ \emph{rook's graph}, as edges represent possible moves by a rook on an $n \times m$ chess board.  The $3 \times 4$ rook's graph is drawn in Figure~\ref{fi:rook34}, along with a min-$3$TDS.

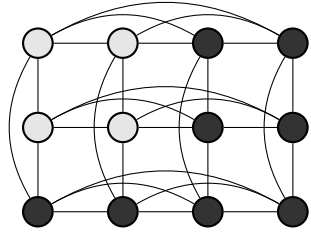
\begin{figure}[htp]
\centering
\begin{tikzpicture}
\matrix[nodes={draw, thick, fill=black!10, circle},row sep=0.7cm,column sep=0.7cm] {
  \node(11){}; &
  \node(12){}; &
  \node[fill=black!80](13){}; &
  \node[fill=black!80](14){}; \\
  \node(21){}; &
  \node(22){}; &
  \node[fill=black!80](23){}; &
  \node[fill=black!80](24){}; \\
  \node[fill=black!80](31){}; &
  \node[fill=black!80](32){}; &
  \node[fill=black!80](33){}; &
  \node[fill=black!80](34){}; \\
};
\draw (11) to (12) to (13) to (14); \draw (11) to[bend left] (13); \draw (12) to[bend left] (14); \draw (11) to[bend left] (14);
\draw (21) to (22) to (23) to (24); \draw (21) to[bend left] (23); \draw (22) to[bend left] (24); \draw (21) to[bend left] (24);
\draw (31) to (32) to (33) to (34); \draw (31) to[bend left] (33); \draw (32) to[bend left] (34); \draw (31) to[bend left] (34);
\draw (11) to (21) to (31) to[bend left] (11);  \draw (12) to (22) to (32) to[bend left] (12);  \draw (13) to (23) to (33) to[bend left] (13);  \draw (14) to (24) to (34) to[bend left] (14);
\end{tikzpicture}
\caption{The $3 \times 4$ rook's graph, i.e., $K_3 \Box K_4$.  The dark vertices form a min-$3$TDS, so $\gamma_{\times 3,t}(K_3 \Box K_4)=8$.}\label{fi:rook34}
\end{figure}

In \cite{Viz}, Vizing studied the \emph{domination number} of graphs, i.e. the minimal cardinality of a dominating set, and made an elegant conjecture that has subsequently become one the most famous open problems in domination theory.
\begin{conj}[Vizing's Conjecture]
For any graphs $G$ and $H$, $$\gamma(G)\gamma(H) \leq \gamma(G\Box H),$$
where $\gamma(G)$ and $\gamma(H)$ are the domination numbers of the graphs $G$ and $H$, respectively.
\end{conj}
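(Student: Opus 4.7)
The plan is to attack this via the classical Clark--Suen style argument, which remains the most successful general framework for Vizing's conjecture. First I would fix a minimum dominating set $D_G$ of $G$ and partition $V_G \setminus D_G$ by assigning to each vertex a chosen dominating neighbour in $D_G$. This yields a ``column block'' decomposition of $G \Box H$: one block $B_v = \pi_v \times V_H$ for each $v \in D_G$, where $\pi_v$ is the class assigned to $v$. Any dominating set $S$ of $G \Box H$ must then dominate each block $B_v$.

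Next I would classify each vertex of $S$ as acting \emph{horizontally} (dominating along its copy of $H$) or \emph{vertically} (dominating along its copy of $G$), and sum contributions block by block. Counting carefully — each vertical vertex helps at most $\gamma(G)$ blocks while each horizontal contribution in $B_v$ is forced to cover an induced copy of $H$ — one reaches the Clark--Suen inequality $\gamma(G \Box H) \geq \tfrac{1}{2}\gamma(G)\gamma(H)$. The core lemma is that in each block the projection of $S$ either contains $\gamma(H)$ vertices or spills into adjacent blocks through a quantifiable ``handover.''

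The hard part, and the true obstacle, is closing the factor-of-two gap to obtain the full conjectured bound $\gamma(G)\gamma(H)$. A single vertex of $S$ can do \emph{double duty} by dominating both a row and a column simultaneously, and every standard double-counting or probabilistic scheme pays a factor of two for precisely this phenomenon. I would try to refine the Clark--Suen classification by tailoring the partition of $V_G$ to the structure of $H$ — for instance, via a discharging rule that moves charge across block boundaries, or by relaxing to fractional/LP domination — in the hope of extracting the missing factor.

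Realistically, I expect this last step to be impassable with standard tools: the conjecture has resisted over fifty years of attack, and the best general results (Suen--Tarr, Clark--Ismail--Suen, and related refinements) still only sharpen the constant by lower-order terms. My proposal is therefore less a plan to settle the conjecture than a roadmap for where obstacles arise; any honest attempt should probably focus on classes of $G$ or $H$ where extra structure (such as the rook's graphs studied in this paper) forces the doubly-dominating vertices to be rare.
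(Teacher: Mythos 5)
You have not proved the statement, and you should not expect to: the statement you were given is Vizing's Conjecture, which the paper states as a \emph{conjecture} precisely because it is a famous open problem --- the paper itself offers no proof, only the context that it has been verified for restricted classes of graphs and that Clark and Suen established the weaker bound $\gamma(G)\gamma(H)\leq 2\gamma(G\Box H)$. So there is no proof in the paper against which to compare your attempt, and your proposal, by its own admission, terminates at exactly that weaker Clark--Suen inequality.

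To be concrete about where the gap sits in your write-up: the block decomposition by a minimum dominating set $D_G$ of $G$, the assignment of each vertex of $V_G\setminus D_G$ to a dominating neighbour, and the horizontal/vertical classification of the vertices of a dominating set $S$ of $G\Box H$ are all standard and correct, and they do yield $\gamma(G)\gamma(H)\leq 2\gamma(G\Box H)$. But the step ``refine the partition, or use discharging, or relax to fractional domination, to recover the lost factor of two'' is not an argument --- it is a wish. Each of the named escape routes is known to fail in general: fractional relaxations prove only Vizing-type inequalities for fractional parameters (as in the cited work of Fisher et al.), and no discharging or charge-transfer scheme is known that controls the doubly-dominating vertices uniformly over all $G$ and $H$. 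You are candid about this, which is to your credit, but the honest conclusion is that your proposal establishes the Clark--Suen bound and nothing more. If you want a provable statement in the spirit of this paper, the right move is the one the authors make: restrict to a concrete family such as the rook's graphs $K_n\Box K_m$, where the domination-type invariants can be computed exactly via the $(0,1)$-matrix formulation, rather than attacking the conjecture in full generality.
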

Over more than forty years (see \cite{BDG} and references therein), Vizing's Conjecture has been shown to hold for certain restricted classes of graphs, and furthermore, upper and lower bounds on the inequality have gradually tightened.  Additionally, researcher have explored inequalities (including Vizing-like inequalities) for different variations of domination \cite{HHS6}. A significant breakthrough occurred when in \cite{CS1} Clark and Suen proved that  $$\gamma (G)\gamma (H)\leq 2\gamma (G\Box H)$$ which led to the discovery of a Vizing-like inequality for total domination \cite{HR9,HPT}, i.e.,
\begin{equation}\label{eq:HRbound}
\gamma_t(G)\gamma_t(H) \leq 2 \gamma_t(G \Box H),
\end{equation}
as well as for paired \cite{BHRo,HJ,CM}, and fractional domination \cite{FRDM}, and the $\{k\}$-domination function (integer domination) \cite{BM,HL,CMH}, and total $\{k\}$-domination function \cite{HL}.

Burchett, Lane, and Lachniet \cite{BLL} and Burchett \cite{B2011} found bounds and exact formulas for the $k$-tuple domination number and $k$-domination number of the rook's graph in square cases, i.e., $K_n \Box K_n$ (where $k$-domination is similar to {$k$-tuple total domination}, but only vertices outside of the domination set need to be dominated).  The $k$-tuple total domination number is known for $K_n \times K_m$ \cite{HK7} and bounds are given for supergeneralized Petersen graphs \cite{KP}.  In \cite{KPS}, the authors showed that the graph $K_n \Box K_m$ is an extremal case in the study of $k$TDS of cartesian product of graphs, motivating the study of the class of rook's graphs. Specifically, they showed that  $$\gamma_{\times k,t}(K_n \Box K_m) \leq \gamma_{\times k,t}(G \Box H),$$ when $G$ and $H$ are two graphs with $n$ and $m$ vertices, respectively. Moreover, they computed $\gamma_{\times 2, t}(K_n \Box K_m)$ for all $m\ge n$.

This paper is organized as follows. In Section 2, we recall basic properties on $k$TDS. In Section 3, we describe a special class of $3$TDS matrices. In Section 4, we describe several useful inequalities for $\gamma_{\times 3, t}(K_n \Box K_m)$.  In Section 5, we compute $\gamma_{\times 3, t}(K_n \Box K_n)$, for any $n\ge3$. In Section 6, we describe our main result: we determine the value of $\gamma_{\times 3, t}(K_n \Box K_m)$ in Theorem~\ref{theo:thegeneralcase} for all $m\ge n$.

\section{Preliminares}
We recall some basic properties of $k$TDS and their relations with $(0,1)$-matrices.
Assume the vertex set of the complete graph $K_n$ is $[n]:=\{1,\dots, n\}$. Given $D\subseteq V_{K_n}\times V_{K_m}$, we can associate to it a $n \times m$ $(0,1)$-matrix $S=(s_{ij})$ with $s_{ij}=1$ if and only if $(i,j) \in D$.
Let $S=(s_{ij})$ be a $n \times m$ $(0,1)$-matrix. Define 
\begin{eqnarray*}
\kappa_S(i,j) & =&\overbrace{\left( \textstyle\sum_{r \in [m]} s_{ir} \right)}^{\text{$i$-th row sum}} + \overbrace{\left( \textstyle\sum_{r \in [n]} s_{rj} \right)}^{\text{$j$-th column sum}} - 2s_{ij}\\
 & =&\mathfrak{r}_S(i) + \mathfrak{c}_S(j) - 2s_{ij}.
 \end{eqnarray*}
If no confusion arises, we will simply write $\mathfrak{r}(i)$, $\mathfrak{c}(j)$ and $\kappa(i,j)$. Notice that $\mathfrak{r}(i)$ is the number of ones in the $i$-th row of $S$ and, 
similarly, $\mathfrak{c}(j)$ is the number of ones in the $j$-th column of $S$.
Moreover, we will denote by $|S|$ the number of ones in $S$.

A  $n \times m$ $(0,1)$-matrix $S=(s_{ij})$ corresponds to a  $k$TDS $D$ of $K_n \Box K_m$ if and only it satisfies
\begin{equation*}
\kappa(i,j) \geq k
\end{equation*}
for all $i\in [n]$ and $j \in [m]$, which we call the \emph{$\kappa$-bound}.  

\begin{figure}[htp]
\centering
\begin{tikzpicture}
\matrix[square matrix,nodes={draw,
      minimum height=11pt,
      anchor=center,
      text width=11pt,
      align=center,
      inner sep=0pt
    },]{
 0 & 0 & |[fill=black!60]|1 & |[fill=black!60]|1 \\
 0 & 0 & |[fill=black!60]|1 &  |[fill=black!60]|1 \\
 |[fill=black!60]|1 & |[fill=black!60]|1 & |[fill=black!60]|1 &  |[fill=black!60]|1 \\
};
\end{tikzpicture}
\caption{The $3$TDS matrix corresponding to Figure \ref{fi:rook34}.}\label{fi:rook34mat}
\end{figure}

We call a $n \times m$ $(0,1)$-matrix $S$ a \emph{$k$TDS matrix} if it satisfies the $\kappa$-bound for all $i \in [n]$ and $j \in [m]$.  Furthermore, we call $S$ a \emph{min-$k$TDS matrix} if it has exactly $\gamma_{\times k,t}(K_n \Box K_m)$ ones.  Note that a $k$TDS matrix (respectively min-$k$TDS matrix) remains a $k$TDS matrix (respectively min-$k$TDS matrix) under permutations of its rows and/or columns.

\begin{lemma}\label{lm:sparserow}
For $n \geq 1$ and $m \geq 1$, a $n \times m$ $k$TDS matrix with an all-$0$ column or an all-$0$ row has at least $kn$ or $km$ ones, respectively.
\end{lemma}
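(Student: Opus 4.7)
The proof should be a short direct application of the $\kappa$-bound. The plan is as follows.

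Suppose first that $S$ has an all-$0$ column, say column $j_0$, so that $\mathfrak{c}(j_0)=0$ and $s_{i j_0}=0$ for every $i\in[n]$. For each row index $i$, the $\kappa$-bound evaluated at $(i,j_0)$ gives
\begin{equation*}
k \leq \kappa(i,j_0) = \mathfrak{r}(i) + \mathfrak{c}(j_0) - 2s_{i j_0} = \mathfrak{r}(i),
\end{equation*}
so each of the $n$ rows contains at least $k$ ones. Summing over $i\in[n]$ yields $|S|=\sum_{i} \mathfrak{r}(i) \geq kn$, as required.

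The case of an all-$0$ row is entirely symmetric: if row $i_0$ is all zero, the $\kappa$-bound at $(i_0,j)$ forces $\mathfrak{c}(j)\geq k$ for every $j\in[m]$, and summing the column sums gives $|S|\geq km$.

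There is no real obstacle here; the whole argument is a one-line consequence of the definition of the $\kappa$-bound once one notices that the vanishing row or column removes the $\mathfrak{c}(j_0)$ (respectively $\mathfrak{r}(i_0)$) term and the $2s_{ij}$ correction simultaneously. The only thing worth emphasizing in the write-up is that the inequality holds for \emph{every} row (resp.\ column), which is what allows us to sum.
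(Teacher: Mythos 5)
Your proof is correct and follows exactly the argument in the paper: the $\kappa$-bound at the entries of the all-$0$ column (resp.\ row) forces every row sum $\mathfrak{r}(i)\ge k$ (resp.\ every column sum $\mathfrak{c}(j)\ge k$), and summing gives the bound. Nothing further is needed.
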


\begin{proof} Let $S$ be a $n\times m$ $k$TDS matrix. Assume there exists $1\le j_0\le m$ such that $\mathfrak{c}(j_0)=0$. Then to achieve $\kappa(i,j_0) \geq k$ for any $i \in [n]$, we need $\mathfrak{r}(i)\ge k$.  Since this is true for every row in $S$, we must have at least $kn$ ones. A similar argument works if there exists $1\le i_0\le n$ such that $\mathfrak{r}(i_0)=0$.
\end{proof}

There are instances when $kn$ ones is the least number of ones in any $n \times m$ $k$TDS matrix. We establish some cases in the following proposition, see also Theorem 3.3 from \cite{KPS}.

\begin{prop}\label{prop:manycols}
When $m \geq n \geq 2$ and $m \geq k$,
\begin{equation*}\label{eq:Knmsimplebound}
\gamma_{\times k,t}(K_n \Box K_m) \leq kn
\end{equation*}
with equality when $m \geq kn-1$.
%
\end{prop}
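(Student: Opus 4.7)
The plan is to prove the two statements separately: first the upper bound $\gamma_{\times k,t}(K_n \Box K_m) \leq kn$ by explicit construction, and then the matching lower bound under the hypothesis $m \geq kn-1$ by a counting argument that invokes the $\kappa$-bound on a deficient row.

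For the upper bound, I would exhibit a concrete $k$TDS matrix with $kn$ ones. Take $S$ to be the $n\times m$ $(0,1)$-matrix whose first $k$ columns are all-$1$ and whose remaining $m-k$ columns are all-$0$ (this is allowed since $m\ge k$). Then $\mathfrak{r}(i)=k$ for every row, while $\mathfrak{c}(j)=n$ for $j\le k$ and $\mathfrak{c}(j)=0$ otherwise. A quick check of the $\kappa$-bound: for $j\le k$, $\kappa(i,j)=k+n-2\ge k$ because $n\ge 2$; for $j>k$, $\kappa(i,j)=k+0-0=k$. Thus $S$ is a $k$TDS matrix with $|S|=kn$.

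For the lower bound, I would assume toward a contradiction that some $k$TDS matrix $S$ has $|S|<kn$. Then by averaging over the $n$ rows there exists $i_0$ with $a:=\mathfrak{r}(i_0)\le k-1$. Applying the $\kappa$-bound at $(i_0,j)$ yields $\mathfrak{c}(j)\ge k-a+2s_{i_0,j}$ for each $j\in[m]$. Summing over $j$ gives
\[
|S|=\sum_{j=1}^{m}\mathfrak{c}(j)\ \ge\ m(k-a)+2a.
\]
Viewing the right-hand side as an affine function of $a$ on $\{0,1,\dots,k-1\}$, it is minimized (for $m\ge 3$) at $a=k-1$, giving $|S|\ge m+2k-2$. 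Using $m\ge kn-1$ then yields $|S|\ge kn+2k-3\ge kn$ whenever $k\ge 2$, contradicting $|S|<kn$.

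The only remaining edge case is $k=1$: here $a\le 0$ forces $\mathfrak{r}(i_0)=0$, so Lemma \ref{lm:sparserow} gives $|S|\ge m\ge n=kn$ directly from the hypothesis $m\ge n$. Combining both directions yields equality.

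The main obstacle is organising the lower-bound case analysis cleanly; once one writes $|S|\ge m(k-a)+2a$ the arithmetic is essentially forced, but care is needed at the boundary values $a=0$ and $a=k-1$, and in particular at $k=1$, where the estimate $m\ge kn-1$ is weaker than $m\ge n$ and Lemma \ref{lm:sparserow} must be invoked separately.
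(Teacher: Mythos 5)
Your proof is correct. The upper bound is established by the same construction as in the paper (a block of $k$ all-one columns, placed at the front rather than the back), so there is nothing to compare there. For the equality case your route is genuinely different: the paper argues by cases on the column structure --- an all-zero column triggers Lemma~\ref{lm:sparserow}; otherwise every column has a one, which settles $m\ge kn$ immediately and reduces the remaining case to $m=kn-1$ with all column sums equal to $1$, where the $\kappa$-bound forces rows containing a one to have at least $k+1$ ones. You instead locate a single deficient row $i_0$ with $a=\mathfrak{r}(i_0)\le k-1$ (which must exist if $|S|<kn$), use the $\kappa$-bound along that row to bound every column sum from below, and sum to get $|S|\ge m(k-a)+2a$; minimising over $a$ and invoking $m\ge kn-1$ closes the argument for $k\ge 2$, with $k=1$ handled via Lemma~\ref{lm:sparserow}. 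Your inequality is a clean, uniform one-line estimate that avoids the paper's case split, and the arithmetic checks out: for $k\ge2$ and $n\ge2$ one has $m\ge kn-1\ge3$, so the affine function $mk-(m-2)a$ is indeed minimised at $a=k-1$, and $m+2k-2\ge kn+2k-3\ge kn$. What the paper's version buys in exchange is that it reuses Lemma~\ref{lm:sparserow} as the only tool and makes visible exactly which configurations are tight at $m=kn-1$; what yours buys is brevity and a single quantitative inequality that could be recycled elsewhere. Both are complete.
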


\begin{proof}
If $m \geq n \geq 2$ and $m \geq k$, the $n \times m$ $(0,1)$-matrix with ones in the last $k$ columns and zeros elsewhere is a $k$TDS matrix with $kn$ ones.

Assume $m \geq kn-1$ and let $S$ be a $n \times m$ $k$TDS matrix.  If $S$ has a column of zeros, then $|S|\ge kn$ by Lemma~\ref{lm:sparserow}.  If $S$ has no column of zeros but $m\ge kn$, then $|S|\ge kn$.  Thus, assume $m=kn-1$ and $\mathfrak{c}(j)\ge1$ for all $1\le j\le m$.  If $|S|< kn$, then $\mathfrak{c}(j)=1$ for all $1\le j\le m$.  Therefore, if $s_{ij}=1$, then $\mathfrak{r}(i)\ge k+1$ to satisfy $\kappa(i,j) \geq k$.  If this is true for every row, then $|S|\ge(k+1)n > kn$.  Otherwise, there is a row of zeros, and Lemma~\ref{lm:sparserow} implies $|S|\ge km \geq kn$.

\end{proof}



Motivated by \cite{BLL}, given a $(0,1)$-matrix $S$, we can construct a graph $\Gamma(S)$ with vertices corresponding to the ones in $S$ and edges between $2$ ones belonging to the same row or column, if there are no other ones between them.  The following gives one such example.
\begin{center}
\begin{tikzpicture}
\matrix[square matrix,nodes={draw,
      minimum height=11pt,
      anchor=center,
      text width=11pt,
      align=center,
      inner sep=0pt
    },]{
 0 &  0 &  0 & 0 & 0 & |[fill=black!60]|1 & |[fill=black!60]| 1 \\
0&  0 &  0 & 0 & 0 & |[fill=black!60]|1 & |[fill=black!60]| 1 \\
0 & 0 & 0 &  0 &  0 &   |[fill=black!60]|1 & |[fill=black!60]| 1 \\
0 & 0 & 0 & 0 &  0 &   |[fill=black!60]|1 & |[fill=black!60]| 1 \\
|[fill=black!30]| 1 &  |[fill=black!30]|1 &  |[fill=black!30]|1 & |[fill=black!30]|1 & |[fill=black!30]|1 & 0 &  0 \\
};
\end{tikzpicture}
\qquad
\raisebox{0.4em}{
\begin{tikzpicture}[scale=0.41]
\tikzstyle{vertex}=[draw,circle,fill=black!60,minimum size=6pt,inner sep=0pt]

\node[vertex,pattern=crosshatch] (v1) at (6,1) {};
\node[vertex,pattern=crosshatch] (v2) at (5,1) {};
\node[vertex,pattern=crosshatch] (v3) at (6,0) {};
\node[vertex,pattern=crosshatch] (v4) at (5,0) {};
\node[vertex,pattern=crosshatch] (v5) at (6,-1) {};
\node[vertex,pattern=crosshatch] (v6) at (5,-1) {};
\node[vertex,pattern=crosshatch] (v7) at (6,-2) {};
\node[vertex,pattern=crosshatch] (v8) at (5,-2) {};

\draw (v1) -- (v2) ;
\draw (v3) -- (v4) ;
\draw (v5) -- (v6) ;
\draw (v7) -- (v8) ;
\draw (v1) -- (v3) -- (v5) -- (v7);
\draw (v2) -- (v4) -- (v6) -- (v8);

\tikzstyle{vertex}=[draw,circle,fill=black!30,minimum size=6pt,inner sep=0pt]

\node[vertex,pattern=north east lines] (u1) at (4,-3) {};
\node[vertex,pattern=north east lines] (u2) at (3,-3) {};
\node[vertex,pattern=north east lines] (u3) at (2,-3) {};
\node[vertex,pattern=north east lines] (u4) at (1,-3) {};
\node[vertex,pattern=north east lines] (u5) at (0,-3) {};

\draw (u1) -- (u2) -- (u3) -- (u4) -- (u5);
\end{tikzpicture}
}
\end{center}
In this way, every $k$TDS matrix $S$ correspond to a graph, which has, in general, several (connected) components.
 If the set of vertices of a component of $\Gamma(S)$ is $\{(i_1,j_1),\dots (i_p,j_p)\}$, then we define the corresponding component of $S$ as the submatrix of $S$ formed by the intersection of rows $\{R_{i_1},\dots,R_{i_p}\}$ and columns $\{C_{j_1},\dots,C_{j_p}\}$, where $R_d$ and $C_d$ are the $d$-th row and column of $S$, respectively.  
 We shade two components in the example above. In this example, the $5 \times 7$ matrix is the union of two components (a $4 \times 2$ component and a $1 \times 5$ component).  A $k$TDS matrix $S$ with a component $H$, up to permutations of the rows and columns of $S$, looks like one of the following
$$
\begin{array}{|c|c|}
\hline
H & \emptyset \\
\hline
\emptyset & ? \\
\hline
\end{array},
\quad
\begin{array}{|c|}
\hline
H \\
\hline
\emptyset \\
\hline
\end{array},
\quad
\begin{array}{|c|c|}
\hline
H & \emptyset \\
\hline
\end{array},
\text{ or}
\quad
\begin{array}{|c|}
\hline
H \\
\hline
\end{array},
$$
where the question mark ($?$) denotes some $(0,1)$-submatrix, and $\emptyset$ denotes an all-$0$ submatrix.  Components of $k$TDS matrices have the following properties
\begin{itemize}
 \item components have no all-$0$ rows and no all-$0$ columns,
 \item components are $k$TDS matrices in their own right.
\end{itemize}

\begin{remark}\label{rem:2onesrowocol} If $S$ is a $3$TDS matrix with no all-$0$ rows and no all-$0$ columns, then in order to achieve the $3$-bound, it has at least $2$ ones in each row or in each column. Moreover,
if $S$ has at least $2$ ones in each row (or column), the same is true for each of its components. Since we are interested in the study of rook's graphs with $m\ge n$, we will assume that each $3$TDS matrix has at least $2$ ones in each row
\end{remark}

In order to describe our main result, we will need the following $3$TDS matrices.
For $x \geq 1$ and $y \geq 1$ such that $x+y\ge5$, we define $J(x,y)$ as the $x \times y$ all-$1$ matrix.

For $x\ge5$, let $D(x,3)$ be the $x\times3$ $3$TDS matrix whose first $x-3$ rows coincide with $(0,1,1)$,
the $(x-2)$-th and $(x-1)$-th rows coincide with $(1,0,1)$, and the last row coincides with $(1,1,0)$.
Depicted below we have the matrix $D(x,3)$ for $x \in \{5, 6, 7\}$
\begin{center}
$\begin{array}{ccc}
\begin{tikzpicture}
\matrix[square matrix]{
|[fill=white]| & |[fill=black!60]| & |[fill=black!60]| \\
|[fill=white]| & |[fill=black!60]| & |[fill=black!60]| \\
|[fill=black!60]| & |[fill=white]| &  |[fill=black!60]| \\
|[fill=black!60]| & |[fill=white]| &  |[fill=black!60]| \\
|[fill=black!60]| & |[fill=black!60]| & |[fill=white]| \\
};
\end{tikzpicture}
&
\begin{tikzpicture}
\matrix[square matrix]{
|[fill=white]| & |[fill=black!60]| & |[fill=black!60]| \\
|[fill=white]| & |[fill=black!60]| & |[fill=black!60]| \\
|[fill=white]| & |[fill=black!60]| & |[fill=black!60]| \\
|[fill=black!60]| & |[fill=white]| &  |[fill=black!60]| \\
|[fill=black!60]| & |[fill=white]| &  |[fill=black!60]| \\
|[fill=black!60]| & |[fill=black!60]| & |[fill=white]| \\
};
\end{tikzpicture}
&
\begin{tikzpicture}
\matrix[square matrix]{
|[fill=white]| & |[fill=black!60]| & |[fill=black!60]| \\
|[fill=white]| & |[fill=black!60]| & |[fill=black!60]| \\
|[fill=white]| & |[fill=black!60]| & |[fill=black!60]| \\
|[fill=white]| & |[fill=black!60]| & |[fill=black!60]| \\
|[fill=black!60]| & |[fill=white]| &  |[fill=black!60]| \\
|[fill=black!60]| & |[fill=white]| &  |[fill=black!60]| \\
|[fill=black!60]| & |[fill=black!60]| & |[fill=white]| \\
};
\end{tikzpicture}
\end{array}$
\end{center}
\section{On the construction of special $3$TDS matrices}

We describe how to construct a special class of $3$TDS matrices, looking with particular attention at the shape of their components. 
Moreover, we compute the number of ones in such matrices.
Notice that these matrices are exactly the ones appearing in Table \ref{ta:smallk3}.

\begin{prop}\label{prop:compclass2}
For any $m\ge n\ge 6$, except $(n,m)=(6,6)$, there exists a $n \times m$ $3$TDS matrix $S$ with no all-$0$ rows and no all-$0$ columns with at least $2$ ones in each row whose components, up to permutations of the rows and columns, are all $J(1,4)$ or $J(3,2)$, except possibly for
\begin{itemize}
\item exactly one $J(4,2)$ component;
\item exactly one $J(1,y)$ component with $5 \leq y \leq 6$;
\item exactly one $D(5,3)$ component but no $J(4,2)$ component;
\item exactly one $D(6,3)$ component and no $D(5,3)$ component or $J(4,2)$ component or $J(1,y)$ component with $5 \leq y \leq 6$.
\end{itemize}
\end{prop}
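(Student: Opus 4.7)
The plan is to present an explicit block-diagonal construction. Write $a$ and $b$ for the numbers of $J(1,4)$ and $J(3,2)$ components, and let $(r_E, c_E)$ denote the total rows and columns occupied by the exceptional components (chosen at most once each, respecting the mutual-exclusion clauses). The dimension constraints
$$a + 3b + r_E = n, \qquad 4a + 2b + c_E = m$$
eliminate $a$ to give $10\,b = 4n - m - 4 r_E + c_E$, so the residue of $4n - m$ modulo $10$ dictates the choice of exceptional components.

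First I would build a look-up table matching each residue $r \in \{0, \ldots, 9\}$ of $(4n - m) \bmod 10$ with the unique admissible set of exceptional components producing a nonnegative multiple of ten on the right-hand side: $r=0$ uses no exceptional; $r=1$ uses $D(6,3)$; $r=2$ uses $J(4,2)$ with $J(1,6)$; $r=3$ uses $J(4,2)$ with $J(1,5)$; $r=4$ uses $J(4,2)$; $r=5$ uses $D(5,3)$ with $J(1,6)$; $r=6$ uses $D(5,3)$ with $J(1,5)$; $r=7$ uses $D(5,3)$; $r=8$ uses $J(1,6)$; $r=9$ uses $J(1,5)$. A direct check confirms each assignment respects the allowed combinations in the proposition, and the ten residue classes are covered exactly once.

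Next I would verify feasibility for all $(n,m)$ with $m \ge n \ge 6$ and $(n,m) \ne (6,6)$: concretely $b = (4n - m - 4 r_E + c_E)/10 \ge 0$ and $a = n - r_E - 3 b \ge 0$. The excluded case $(6, 6)$ has $4n - m = 18 \equiv 8 \pmod{10}$, which forces $r_E = 1$, $c_E = 6$ (a single $J(1, 6)$) and $b = 2$, leaving $a = 6 - 1 - 6 = -1 < 0$; a brief exhaustive check confirms that no admissible combination of components sums to $(6, 6)$ rows and columns, so the exclusion is tight.

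Finally I would assemble the matrix as the block diagonal of the chosen components (in any order, as the statement is up to row and column permutations) and verify the structural properties. Each $J(x, y)$ with $x + y \ge 5$ is a $3$TDS matrix since every entry yields $\kappa = x + y - 2 \ge 3$; for $D(x, 3)$ with $x \ge 5$ a row-by-row check on its three row types (with column sums $3, x - 2, x - 1$) confirms the $\kappa$-bound. Every component has at least two ones per row, no all-zero rows, and no all-zero columns, and these three properties pass to the block-diagonal composition. The main obstacle is the feasibility step: one must dispose of ten residue classes and carefully check the small values $n = 6, 7$ and the boundary $m = n$, where the permissible construction is tightest.
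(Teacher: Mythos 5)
Your route is genuinely different from the paper's. The paper proves this proposition by a double induction seeded by the explicit matrices of Table~\ref{ta:smallk3}: five surgery rules convert an admissible $n\times m$ configuration into an admissible $n\times(m+1)$ one, and six more handle the step from $n\times(n+1)$ to $(n+1)\times(n+1)$. You instead solve the problem in closed form: the two dimension equations force $10b=4n-m-4r_E+c_E$, the ten admissible exceptional configurations realise the ten residues of $4n-m$ modulo $10$ exactly once each (your look-up table is correct, and it is in bijection with Cases I--VI of Proposition~\ref{prop:numberofones}), and the matrix is assembled as a block diagonal. Your verification of the $\kappa$-bound for the individual blocks and for their block-diagonal sum is sound, as is the one-line explanation of the $(6,6)$ exclusion via $a=-1$. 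What this buys is transparency: the component multiset for each $(n,m)$ is exhibited directly rather than being implicit in the interaction of surgery rules with base cases.

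The gap is exactly where you place it, but it is more than an ``obstacle'': the deferred feasibility check cannot succeed for all $m\ge n\ge6$. Every admissible component has at most $6$ columns, and at most one component ($J(1,5)$ or $J(1,6)$) has column-to-row ratio exceeding $4$, so no matrix of the required form has more than $4n+2$ columns; already for $(n,m)=(6,13)$ the residue is $1$, forcing a $D(6,3)$ and $b=(11-21)/10=-1<0$. Thus the universal claim you set out to verify is false, and your write-up cannot be completed as announced. (The paper's own proof has the same defect: its rule ``replace a $J(1,y)$, $4\le y\le5$, by $J(1,y+1)$'' applied to $J(1,6)\oplus J(1,4)\oplus J(4,2)$ at $(6,12)$ produces a second $J(1,y)$ with $y\ge5$, which is not admissible.) In the only range where the proposition is actually invoked, namely $m\le\lfloor(7n-1)/3\rfloor-2$ in Theorem~\ref{theo:thegeneralcase}, one can check that $b\ge0$ always holds and that $a\ge0$ reduces to $3(m-c_E)\ge2(n-r_E)$, whose tightest instances occur on the diagonal $m=n$ and are satisfied for every $n\ge6$ except $n=6$ with residue $8$, i.e.\ precisely the excluded case $(6,6)$. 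So to turn your sketch into a proof you must either restrict the claim to that range or carry out the $a,b\ge0$ verification explicitly and record where it fails.
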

\begin{proof} Notice that by Table \ref{ta:smallk3}, it is enough to show that
\begin{itemize} 
\item if $S$ is a $n\times m$ $3$TDS matrix with the properties we require, then we have a way to construct $S'$ a $n\times (m+1)$ $3$TDS matrix with the same properties;
\item if $S$ is a $n\times (n+1)$ $3$TDS matrix with the properties we require, then we have a way to construct $S'$ a $(n+1)\times (n+1)$ $3$TDS matrix with the same properties.
\end{itemize}
Let now $S$ be a $n\times m$ $3$TDS matrix with the properties we require. We will apply the following rules to obtain $S'$ a $n\times (m+1)$ $3$TDS matrix with the same properties.
\begin{enumerate}  
\item If $S$ contains a $J(1,6)$ component and a $D(5,3)$ component, we obtain $S'$ by transforming these two components in two $J(1,4)$ components and a $J(4,2)$ component, and leaving the other components unchanged. 
\item If $S$ contains a $J(1,4)$ component and a $D(6,3)$ component, we obtain $S'$ by transforming these two components in one $J(1,4)$ component and two $J(3,2)$ components, and leaving the other components unchanged. 
\item If $S$ contains a $J(1,6)$ component, a $J(4,2)$ component and a $J(3,2)$ component, we obtain $S'$ by transforming these three components in two $J(1,4)$ components and a $D(6,3)$ component, and leaving the other components unchanged. 
\item If $S$ contains a $J(1,6)$ component, two $J(3,2)$ components and no $J(4,2)$ or $D(5,3)$ components, we obtain $S'$ by transforming these three components in two $J(1,4)$ components and a $D(5,3)$ component, and leaving the other components unchanged.
\item In all other cases, since $S$ always contains at least one  $J(1,y)$ component with $4 \leq y \leq 5$, we obtain $S'$ by transforming the $J(1,y)$ component in a $J(1,y+1)$ component, and leaving the other components unchanged. 
\end{enumerate}
Notice that if $S$ has only one $J(1,6)$ component, only one $J(4,2)$ component, or only one $J(3,2)$ component, then $n=4,5$.

Let now $S$ be a $n\times (n+1)$ $3$TDS matrix with the properties we require. We will apply the following rules to obtain $S'$ a $(n+1)\times (n+1)$ $3$TDS matrix with the same properties.
\begin{enumerate}
\item If $S$ contains a $J(1,y)$ component, with $y=5,6$ and a $J(4,2)$ components, we obtain $S'$ by transforming these two components in a $J(1,y-1)$ component and a $D(5,3)$ component, and leaving the other components unchanged. 
\item If $S$ contains a $J(1,y)$ component, with $y=5,6$ and a $D(5,3)$ component, we obtain $S'$ by transforming these two components in a $J(1,y-1)$ component and two $J(3,2)$ components, and leaving the other components unchanged. 
\item If $S$ contains two $J(1,4)$ components and a $D(6,3)$ components, we obtain $S'$ by transforming these three components in a $J(1,6)$ component, a $J(3,2)$ component and a $D(5,3)$ component, and leaving the other components unchanged. 
\item If $S$ contains two $J(1,4)$ components, a $J(4,2)$ component and no $J(1,y)$ components, with $y=5,6$, we obtain $S'$ by transforming these three components in a $J(1,6)$ component and two $J(3,2)$ components, and leaving the other components unchanged. 
\item If $S$ contains a $D(5,3)$ components and no $J(1,y)$ components, with $y=5,6$, we obtain $S'$ by transforming this component in a $D(6,3)$ component, and leaving the other  components unchanged.
\item In all other cases, since $S$ always contains at least one  $J(3,2)$ component, we obtain $S'$ by transforming this component in a $J(4,2)$ component, and leaving the other  components unchanged. 
\end{enumerate}
Notice that if $S$ has only one $J(1,4)$ component, only one $J(4,2)$ component and no $J(1,y)$ components, with $y=5,6$, then $n=5$, or $n=m=8$ or $n> m$.
Similarly, if $S$ has only one $J(1,4)$ component and one $D(6,3)$ component, then $n=m=7$ or $n> m$.
\end{proof}

We can now compute the number of ones in a $3$TDS matrix satisfying the requirements of the previous proposition.

\begin{prop}\label{prop:numberofones}
For any integer $m\ge n \ge 6$, except $(n,m)=(6,6)$, let $2n\equiv  3m+k \pmod {10}$, where $0 \le k \le 9$. Then the number of ones in a matrix of Proposition \ref{prop:compclass2} is given by
$$\begin{cases}
  \lceil \frac{8n+3m}{5} \rceil  & \text{ if $k=0,1,2,3,4,7,8,9;$}  \\
  \lceil \frac{8n+3m}{5} \rceil +1 & \text{ if $k=5,6.$} 
  \end{cases}$$
\end{prop}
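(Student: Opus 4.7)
The proof will be a direct calculation based on the component decomposition supplied by Proposition~\ref{prop:compclass2}. The crucial observation is that the two ``base'' blocks $J(1,4)$ and $J(3,2)$ are balanced with respect to the linear form $8r + 3c$: each satisfies $5 \cdot (\text{number of ones}) = 8r + 3c$, where $r$ and $c$ denote the number of rows and columns of the component. Accordingly, define for every component $H$ with $r$ rows, $c$ columns, and $o$ ones the \emph{excess} $\epsilon(H) := 5o - 8r - 3c$. A direct check of the seven admissible shapes of Proposition~\ref{prop:compclass2} gives $\epsilon(J(1,4)) = \epsilon(J(3,2)) = 0$, $\epsilon(J(4,2)) = \epsilon(J(1,5)) = 2$, $\epsilon(J(1,6)) = 4$, $\epsilon(D(5,3)) = 1$, and $\epsilon(D(6,3)) = 3$. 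Summing over all components of $S$ and using $\sum r = n$, $\sum c = m$, one obtains the master identity $5|S| = 8n + 3m + E$, where $E$ is the total excess contributed by the at most two non-base components.

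Next, I would perform a finite case analysis on the configurations of non-base components permitted by Proposition~\ref{prop:compclass2}. There are exactly ten: the empty one, together with $\{J(4,2)\}$, $\{J(1,5)\}$, $\{J(1,6)\}$, $\{D(5,3)\}$, $\{D(6,3)\}$, $\{D(5,3), J(1,5)\}$, $\{D(5,3), J(1,6)\}$, $\{J(4,2), J(1,5)\}$, and $\{J(4,2), J(1,6)\}$ (the other combinations are forbidden by the exclusions in the statement of Proposition~\ref{prop:compclass2}). For each I would tabulate both its contribution $\delta$ to $(2n - 3m) \bmod 10$ and its total excess $E$. The resulting table shows that the ten configurations realize all ten residues $k \in \{0, 1, \dots, 9\}$ exactly once, so the extras are uniquely determined by $k$, and one reads off $E = k$ for $0 \le k \le 6$ and $E = k - 5$ for $7 \le k \le 9$.

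To conclude, I would combine the master identity with the easy congruence $8n + 3m \equiv 4k \pmod{5}$, which follows from $2n \equiv 3m + k \pmod{10}$ by multiplying by $3$ and reducing mod $5$. Case by case on $k$, this tells me the ``ceiling defect'' $(-(8n+3m)) \bmod 5$ explicitly, and comparing it with $E$ produces exactly the dichotomy in the statement: for $k \in \{0,1,2,3,4,7,8,9\}$ the excess $E$ matches the defect, giving $|S| = \lceil \frac{8n+3m}{5} \rceil$, whereas for $k \in \{5, 6\}$ the excess exceeds the defect by $5$, adding the extra $+1$.

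The main obstacle is the completeness and correctness of the enumeration in the second paragraph: one must verify that Proposition~\ref{prop:compclass2} admits exactly the ten listed configurations of non-base components, and that the associated map to $(2n - 3m) \bmod 10$ is a bijection, so that $E$ is unambiguously determined by $k$. Once this finite check is done, the remainder of the argument reduces to arithmetic on the small table of excesses and one elementary modular computation.
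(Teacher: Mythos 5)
Your proposal is correct and follows essentially the same route as the paper: the paper's proof is exactly a case analysis over the ten admissible configurations of non-base components (its Cases I--VI, with Cases III--VI each split into $y=5,6$ or $x=5,6$), computing the number of ones as $(8n+3m+E)/5$ and the residue $2n-3m \bmod 10$ in each case; your excess functional $\epsilon(H)=5o-8r-3c$ is just a cleaner bookkeeping device for the same computation, and your tabulated values of $E$ and the residues agree with the paper's. The only difference is organizational, not mathematical.
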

\begin{proof}
Let $S$ be a $n\times m$ $3$TDS matrix with no all-$0$ rows and no all-$0$ columns with at least $2$ ones in each row as described in Proposition \ref{prop:compclass2}.
Let $a$ be the number of $J(1,4)$ components in $S$ and let $b$ be the number of $J(3,2)$ components in $S$. To prove our statement we have to analyze six cases.

\textit{Case I}: Assume $S$ has only $J(1,4)$ and $J(3,2)$ components.  Then
\begin{align*}
n &= a+3b, \\
m &= 4a+2b,
\end{align*}
and the number of ones in $S$ is $(4a+6b)=(8n+3m)/5$.  In this case, we have $2n\equiv 3m \pmod {10}$.

\textit{Case II}: Assume $S$ has $J(1,4)$ components, $J(3,2)$ components and one $J(4,2)$ component.  Then
\begin{align*}
n &= a+3b+4, \\
m &= 4a+2b+2,
\end{align*}
and the number of ones in $S$ is  $(4a+6b+8)=(8n+3m+2)/5= \lceil (8n+3m)/5 \rceil$.  In this case, we have $2n\equiv 3m+2 \pmod {10}$.



\textit{Case III}: Assume $S$ has $J(1,4)$ components, $J(3,2)$ components and one $J(1,y)$ component with $5 \leq y \leq 6$. We have
\begin{align*}
n &= a+3b+1, \\
m &= 4a+2b+y,
\end{align*}
and the number of ones in $S$ is
\begin{align*}
(4a+6b)+y &= (8n+3m+2y-8)/5 \\
  &=
  \begin{cases}
 (8n+3m+2)/5= \lceil (8n+3m)/5 \rceil & \text{ if $y=5;$} \\
 (8n+3m+4)/5= \lceil (8n+3m)/5 \rceil & \text{ if $y=6.$} 
  \end{cases}
\end{align*}
 In this case we have $2n \equiv 3m-3y+2 \pmod {10}$, i.e., $2n \equiv 3m+7, 3m+4 \pmod {10}$ when $y=5,6 $, respectively.
 
 \textit{Case IV}: Assume $S$ has $J(1,4)$ components, $J(3,2)$ components, a $J(1,y)$ component with $5 \leq y \leq 6$ and a $J(4,2)$ component. We have
\begin{align*}
n &= a+3b+5, \\
m &= 4a+2b+y+2,
\end{align*}
and the number of ones in $S$ is
\begin{align*}
(4a+6b)+y+8 &= (8n+3m+2y-6)/5 \\
  &=
  \begin{cases}
 (8n+3m+4)/5= \lceil (8n+3m)/5 \rceil & \text{ if $y=5;$} \\
 (8n+3m+6)/5= \lceil (8n+3m)/5 \rceil+1 & \text{ if $y=6.$} 
  \end{cases}
\end{align*}
 In this case we have $2n \equiv 3m-3y+4 \pmod {10}$, i.e., $2n \equiv 3m+9, 3m+6  \pmod {10}$ when $y=5,6 $, respectively.
 
  \textit{Case V}: Assume $S$ has $J(1,4)$ components, $J(3,2)$ components and one $D(x,3)$ component with $5 \leq x \leq 6$. We have
\begin{align*}
n &= a+3b+x, \\
m &= 4a+2b+3,
\end{align*}
and the number of ones in $S$ is
\begin{align*}
(4a+6b)+2x &= (8n+3m+2x-9)/5 \\
  &=
  \begin{cases}
 (8n+3m+1)/5= \lceil (8n+3m)/5 \rceil & \text{ if $x=5;$} \\
 (8n+3m+3)/5= \lceil (8n+3m)/5 \rceil & \text{ if $x=6.$} 
  \end{cases}
\end{align*}
 In this case we have $2n \equiv 3m+2x+1 \pmod {10}$, i.e., $2n \equiv 3m+1, 3m+3 \pmod {10}$ when $x=5,6 $, respectively.
 
 \textit{Case VI}: Assume $S$ has $J(1,4)$ components, $J(3,2)$ components, a $J(1,y)$ component with $5 \leq y \leq 6$ and a $D(5,3)$ component. We have
\begin{align*}
n &= a+3b+6,\\
m &= 4a+2b+y+3,
\end{align*}
and the number of ones in $S$ is
\begin{align*}
(4a+6b)+10+y &= (8n+3m+2y-7)/5 \\
  &=
  \begin{cases}
 (8n+3m+3)/5= \lceil (8n+3m)/5 \rceil & \text{ if $y=5;$} \\
 (8n+3m+5)/5= \lceil (8n+3m)/5 \rceil+1 & \text{ if $y=6.$} 
  \end{cases}
\end{align*}
In this case we have $2n \equiv 3m-3y+3 \pmod {10}$, i.e., $2n \equiv 3m+8, 3m+5 \pmod {10}$ when $y=5,6 $, respectively.
\end{proof}

\begin{remark}\label{rem:n45onesrow} A direct computation shows that when $n\in\{4,5\}$ and $(n,m)=(6,6)$, we can compute the number of ones of the matrices in Table \ref{ta:smallk3} with no all-$0$ rows and no all-$0$ columns with the formula of Proposition \ref{prop:numberofones}.
\end{remark}

\section{Useful inequalities for min-$3$TDS}

We prove several inequalities for $\gamma_{\times 3,t}(K_n \Box K_m)$. Specifically, we show how $\gamma_{\times 3,t}$
changes when, in a $3$TDS matrix, we increase the number of rows or columns in the general case, or both in the square case.
The first lemma describes a lower bound for the number of ones in a $3$TDS matrix. 

\begin{lemma}\label{lemma:nnvs2n+1} Let $m\ge n\ge3$. Then $\gamma_{\times 3,t}(K_n \Box K_m)\ge2n+2$.
\end{lemma}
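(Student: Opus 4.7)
The plan is to let $S$ be an $n\times m$ $3$TDS matrix, set $r=\min_i\mathfrak{r}(i)$, and split into cases on $r$, aiming to derive $|S|\ge 2n+2$ in each case from the $\kappa$-bound together with Lemma~\ref{lm:sparserow}. The hypothesis $n\ge 3$ will be used only to absorb additive constants, and $m\ge n$ to pass between the row and column viewpoints.

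If $r\ge 3$, then immediately $|S|\ge 3n\ge 2n+3$. If $r=0$, Lemma~\ref{lm:sparserow} gives $|S|\ge 3m\ge 3n\ge 2n+3$. If $r=1$, pick a row $i_0$ with its unique $1$ at column $j_0$; the $\kappa$-bound at $(i_0,j_0)$ forces $\mathfrak{c}(j_0)\ge 4$ and at $(i_0,j)$ with $j\ne j_0$ forces $\mathfrak{c}(j)\ge 2$, so $|S|=\sum_j\mathfrak{c}(j)\ge 4+2(m-1)=2m+2\ge 2n+2$.

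The substantive case is $r=2$. Fix a row $i_0$ with $\mathfrak{r}(i_0)=2$, whose two $1$s lie in columns $j_1,j_2$. The $\kappa$-bound at $(i_0,j_1)$ and $(i_0,j_2)$ gives $\mathfrak{c}(j_1),\mathfrak{c}(j_2)\ge 3$, while at $(i_0,j)$ with $j\notin\{j_1,j_2\}$ it only gives $\mathfrak{c}(j)\ge 1$. The plan is to sub-split on whether any column has weight exactly $1$. If some $j_3\notin\{j_1,j_2\}$ satisfies $\mathfrak{c}(j_3)=1$, let $i_1$ be the unique row with $s_{i_1,j_3}=1$; the $\kappa$-bound at $(i_1,j_3)$ yields $\mathfrak{r}(i_1)\ge 4$, and at $(i,j_3)$ for $i\ne i_1$ yields $\mathfrak{r}(i)\ge 2$, giving $|S|\ge 4+2(n-1)=2n+2$. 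If, on the contrary, every column has $\mathfrak{c}(j)\ge 2$, then combining this with $\mathfrak{c}(j_1),\mathfrak{c}(j_2)\ge 3$ gives $|S|\ge 3+3+2(m-2)=2m+2\ge 2n+2$.

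The only delicate point is the $r=2$ case, where the naive row-sum bound $|S|\ge 2n$ falls short of the target by $2$; the fix is to exploit the asymmetry between rows and columns created by the minimum-weight row, swapping perspectives according to whether the columns outside $\{j_1,j_2\}$ are "thin" (weight~$1$) or not. All other sub-cases reduce to a single application of the $\kappa$-bound along a row or column of minimum weight.
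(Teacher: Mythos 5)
Your proof is correct. Every step checks out against the definition of $\kappa$: in the $r=1$ case, $\kappa(i_0,j_0)=\mathfrak{c}(j_0)-1$ and $\kappa(i_0,j)=\mathfrak{c}(j)+1$ give exactly the column bounds you claim; in the $r=2$ case, $\kappa(i_0,j_1)=\mathfrak{c}(j_1)$, $\kappa(i_0,j)=\mathfrak{c}(j)+2$, and the symmetric computations along a thin column $j_3$ give $\mathfrak{r}(i_1)\ge4$ and $\mathfrak{r}(i)\ge2$; and the hypotheses $n\ge3$ and $m\ge n$ are used exactly where needed. Your route differs from the paper's in a worthwhile way. The paper argues by contradiction: it supposes $|S|=2n+1$, invokes Remark~\ref{rem:2onesrowocol} to assume every row has at least $2$ ones, deduces that the row profile is exactly one row of weight $3$ and $n-1$ rows of weight $2$, and then needs three separate terminal arguments for $n=3$, $n=4$, and $n\ge5$, the first two of which pin down explicit configurations and exhibit a failing $\kappa$-value. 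Your direct case split on $r=\min_i\mathfrak{r}(i)$ avoids both the appeal to that remark (whose blanket ``we will assume at least $2$ ones in each row'' is the least tidy point of the paper's argument) and the small-$n$ case distinctions; the price is the extra sub-split in the $r=2$ case between thin and non-thin columns, which is precisely the row/column perspective swap the paper also exploits, just packaged more uniformly. Both proofs are elementary double-counting arguments from the $\kappa$-bound; yours is arguably cleaner and yields the slightly stronger bound $2n+3$ in every case except $r=2$.
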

\begin{proof} Suppose that $\gamma_{\times 3,t}(K_n \Box K_m)\le2n+1$ and let $S$ be a $n\times m$ $3$TDS matrix with $|S|=2n+1$. Since $2n+1<3n$, by Lemma \ref{lm:sparserow}, $S$ has no all-$0$ rows or all-$0$ columns. 
Since by Remark \ref{rem:2onesrowocol} we can assume that $\mathfrak{r}(i)\ge2$ for all $1\le i\le n$, then $S$ has one row with $3$ ones and $n-1$ rows with $2$ ones. 
Without loss of generality, we can assume that the first row of $S$ has $3$ ones in the first three entries. As a consequence,  $\mathfrak{r}(i)=2$ for all $2\le i\le n$.
For all $1\le j\le3$, $\kappa(1,j)=3+\mathfrak{c}(j)-2=\mathfrak{c}(j)+1\ge3$, and hence $\mathfrak{c}(j)\ge 2$, i.e. each of the first three columns of $S$ has at least $2$ ones.
Moreover, if $m\ge4$, since $S$ has no all-$0$ rows or all-$0$ columns, for all $4\le j\le m$ there must exists $2\le i\le n$ such that $S$ has a one in position $(i,j)$. Then $\kappa(i,j)=2+\mathfrak{c}(j)-2=\mathfrak{c}(j)\ge3$,
i.e. each of the last $m-3$ columns of $S$ has at least $3$ ones.

Assume $n=3$. If $m\ge4$, since $\mathfrak{c}(j)\ge 2$ for all $1\le j\le 3$ and $\mathfrak{c}(j)\ge 3$ for all $4\le j\le m$, then $|S|\ge6+3(m-3)=3m-3>2n-1$. We can then assume that $m=3$ and that the zeros of $S$ are in position $(2,2)$ and $(3,1)$. However, $\kappa(2,1)=2$ and so $S$ is not a $3$TDS matrix.

Assume now $n=4$. Since $\mathfrak{c}(4)\ge3$, the last column of $S$ is $(0,1,1,1)^t$. Hence, we can assume that the remaining ones of $S$ are in position $(2,1)$, $(3,2)$ and $(4,3)$. However, $\kappa(2,1)=2$ and so $S$ is not a $3$TDS matrix.

Assume now $n\ge5$. Counting the ones of $S$ by columns we obtain that $|S|\ge 6+3(m-3)=3m-3$. However, since $m\ge n\ge5$, $3m-3>2n+1$ and so $S$ is not a $3$TDS matrix.
\end{proof}

\begin{remark}\label{rem:smallcasesdiag} If $3\le n\le 10$, then by Lemma \ref{lemma:nnvs2n+1}, the $n\times n$ $3$TDS matrix of Table \ref{ta:smallk3} are min-$3$TDS and so  $\gamma_{\times 3,t}(K_n \Box K_n)=2n+2$. 
\end{remark}

We are now able to compute $\gamma_{\times 3,t}(K_3 \Box K_m)$ for all $m\ge3$.
\begin{lemma}\label{lemma:n3case3tds} If $m\ge3$, then 
$$
\gamma_{\times 3,t}(K_3 \Box K_m)=
\begin{cases}
  8 & \text{ if $m=3,4;$} \\
  9 & \text{ if $m\ge 5.$} 
  \end{cases}
$$
\end{lemma}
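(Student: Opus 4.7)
The plan is to split the argument into the three relevant ranges: $m=3$, $m=4$, and $m\ge5$. For the case $m=3$, the value $\gamma_{\times 3,t}(K_3\Box K_3)=8$ is exactly what Remark~\ref{rem:smallcasesdiag} gives. For $m=4$, the upper bound $\gamma_{\times 3,t}(K_3\Box K_4)\le8$ is witnessed by the explicit matrix displayed in Figure~\ref{fi:rook34mat}, while the matching lower bound is Lemma~\ref{lemma:nnvs2n+1} applied with $n=3$. For $m\ge5$, the upper bound $\gamma_{\times 3,t}(K_3\Box K_m)\le 9$ comes directly from Proposition~\ref{prop:manycols} (with $k=n=3$), so the bulk of the work is proving the sharp lower bound $\gamma_{\times 3,t}(K_3\Box K_m)\ge 9$ when $m\ge 5$.

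For this lower bound I would argue by contradiction: assume a $3\times m$ $3$TDS matrix $S$ with $|S|=8$ exists. By Lemma~\ref{lm:sparserow}, $S$ has neither an all-$0$ row (which would force $|S|\ge 3m\ge 15$) nor an all-$0$ column (which would force $|S|\ge 9$), and by Remark~\ref{rem:2onesrowocol} every row has at least two ones. The row-sums $(\mathfrak{r}(1),\mathfrak{r}(2),\mathfrak{r}(3))$ are therefore a composition of $8$ into three parts each $\ge 2$, so up to permutation they are either $\{2,2,4\}$ or $\{2,3,3\}$. The key observation is that if $\mathfrak{r}(i)=2$ and $s_{ij}=1$, then $\kappa(i,j)=\mathfrak{r}(i)+\mathfrak{c}(j)-2=\mathfrak{c}(j)\ge3$ forces $\mathfrak{c}(j)=3$, i.e.\ column $j$ is all-$1$; similarly, a $1$ in a row of sum $3$ forces $\mathfrak{c}(j)\ge2$.

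In the $\{2,2,4\}$ case, each of the two rows of sum $2$ has its two ones in all-$1$ columns, and since being an all-$1$ column requires $1$'s in both those rows, their ones occupy the same two columns $T$ with $|T|=2$. The row of sum $4$ contributes $|T|=2$ ones inside $T$ and at most $2$ additional nonzero columns, so at most $4$ columns are nonzero, contradicting $m\ge 5$. In the $\{2,3,3\}$ case, the row of sum $2$ again forces $|T|=2$ all-$1$ columns; the remaining $8-6=2$ ones lie in rows of sum $3$ and, by the constraint $\mathfrak{c}(j)\ge 2$ for their columns (while row $1$ is $0$ there), must share a single column. This produces at most $3$ nonzero columns, again contradicting $m\ge 5$.

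The main obstacle is the combinatorial bookkeeping in the case analysis — specifically, the careful derivation that every $1$ in a short row forces a full column, which in turn pins down the supports of the other short rows and caps the number of nonzero columns. Once this rigidity is extracted, both partitions of $8$ are quickly eliminated for $m\ge 5$, and combining the three ranges yields the stated formula.
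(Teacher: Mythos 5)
Your proof is correct and follows essentially the same route as the paper's: the lower bound $8$ comes from Lemma~\ref{lemma:nnvs2n+1}, the values for $m=3,4$ from explicit matrices, and for $m\ge5$ the key step is that a weight-$2$ row forces its two columns to be all-$1$, after which a counting argument rules out $|S|=8$. The only difference is cosmetic: the paper avoids your case split on the row-sum profile by noting that the two forced all-$1$ columns already contain $6$ ones and each of the remaining $m-2\ge3$ columns must contain at least one more, giving $|S|\ge9$ directly.
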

\begin{proof} By Lemma \ref{lemma:nnvs2n+1}, $\gamma_{\times 3,t}(K_3 \Box K_m)\ge8$. Looking at the $3$TDS matrices in Table \ref{ta:smallk3}, we obtain that $\gamma_{\times 3,t}(K_3 \Box K_3)=\gamma_{\times 3,t}(K_3 \Box K_4)=8$.

Let now $m\ge5$. Suppose that there exists $S$ a $3\times m$ $3$TDS matrix with $|S|=8$. By Remark \ref{rem:2onesrowocol}, this implies that there exists $1\le i\le 3$ such that $\mathfrak{r}(i)=2$. Without loss of generality we can assume that $i=3$
and that the last row of $S$ coincides with $(0,\dots,0,1,1)$. If $m-1\le j\le m$, then $\kappa(3,j)=2+\mathfrak{c}(j)-2\ge3$. This implies that $\mathfrak{c}(m-1)=\mathfrak{c}(m)=3$. Moreover, by Lemma \ref{lm:sparserow}, 
$S$ has no all-$0$ rows or all-$0$ columns, and hence $\mathfrak{c}(j)\ge1$ for all $1\le j\le m-2$. This implies that $|S|=\sum_{j=1}^m\mathfrak{c}(j)\ge (m-2)+6>8$, but this is a contraddiction.
\end{proof}

The following result describes the relation between min-$3$TDS matrices that have the same number of rows but whose number of columns differs by one. 

\begin{lemma}\label{lemma:3dtsgrowsbyrow} Let $m\ge n\ge3$. Then $$\gamma_{\times 3,t}(K_n \Box K_m)\le \gamma_{\times 3,t}(K_n \Box K_{m+1})\le \gamma_{\times 3,t}(K_n \Box K_m)+1.$$
\end{lemma}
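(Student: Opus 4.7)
The plan is to prove the two inequalities by explicit matrix constructions, adding or removing a column to convert between $3$TDS matrices of sizes $n\times m$ and $n\times(m+1)$.

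For the upper bound $\gamma_{\times 3,t}(K_n \Box K_{m+1}) \le \gamma_{\times 3,t}(K_n \Box K_m)+1$, I would take a min-$3$TDS matrix $S$ of $K_n \Box K_m$, locate a row $i_0$ with $\mathfrak{r}_S(i_0) \ge 3$ (which exists because Lemma~\ref{lemma:nnvs2n+1} gives $|S| \ge 2n+2$ while Remark~\ref{rem:2onesrowocol} gives every row at least two ones), and append a new $(m+1)$-th column whose only $1$ sits in row $i_0$. The resulting $n\times(m+1)$ matrix $S'$ has $|S|+1$ ones, and the $\kappa$-bound is routine to verify: on the old columns nothing decreases (row $i_0$ gains a $1$, other rows unchanged), and on the new column the values are $\kappa_{S'}(i_0,m+1)=\mathfrak{r}_S(i_0)\ge 3$ at the inserted $1$ and $\kappa_{S'}(i,m+1)=\mathfrak{r}_S(i)+1\ge 3$ at the other cells.

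For the lower bound $\gamma_{\times 3,t}(K_n \Box K_m) \le \gamma_{\times 3,t}(K_n \Box K_{m+1})$, I would first dispose of the base case $n=3$ directly via Lemma~\ref{lemma:n3case3tds}. For $n\ge 4$ (so $m\ge 4$), take a min-$3$TDS matrix $S'$ of $K_n \Box K_{m+1}$. If $S'$ has an all-zero column, simply delete it. Otherwise, pick a column $j_0$ of minimum weight $c_0:=\mathfrak{c}_{S'}(j_0)\ge 1$, and set $I_0=\{i: s'_{i,j_0}=1\}$, so $|I_0|=c_0$. The $\kappa$-bound at $(i,j_0)$ for $i\in I_0$ rearranges to $\mathfrak{r}_{S'}(i)\ge 5-c_0$. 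I would then form $S$ from $S'$ by deleting column $j_0$ and, for each $i\in I_0$ with $\mathfrak{r}_{S'}(i)\le m$, adding a single new $1$ in row $i$ at a cell $(i,j^*_i)$ with $j^*_i\ne j_0$ and $s'_{i,j^*_i}=0$ (such a cell exists precisely because $\mathfrak{r}_{S'}(i)\le m$ and $s'_{i,j_0}=1$). Since we delete $c_0$ ones and add at most $c_0$, we obtain $|S|\le|S'|$.

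The $\kappa$-bound verification for $S$ then splits into three cases: (i) at an unchanged cell, $\kappa_S(i,j)\ge \kappa_{S'}(i,j)\ge 3$; (ii) at a patched cell $(i,j^*_i)$, the inequalities $\mathfrak{r}_{S'}(i)\ge 5-c_0$ and $\mathfrak{c}_{S'}(j^*_i)\ge c_0$ combine to give $\kappa_S(i,j^*_i)\ge 3$; (iii) for an $i\in I_0$ with $\mathfrak{r}_{S'}(i)=m+1$ no patch is possible, but row $i$ remains an all-$1$ row of length $m$ in $S$, and then $\kappa_S(i,j)=m+\mathfrak{c}_S(j)-2\ge 3$ holds automatically because $m\ge 4$ and $\mathfrak{c}_{S'}(j)\ge 1$.

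The main obstacle is the lower-bound direction. Column deletion decreases $\kappa$ in every row that intersected the deleted column, so the construction has to compensate by strategically adding new ones, and the $\kappa$-bound at each inserted $1$ becomes the binding constraint. This is exactly what forces both the choice of $j_0$ as a minimum-weight column and the key inequality $\mathfrak{r}_{S'}(i)+\mathfrak{c}_{S'}(j^*_i)\ge 5$. The genuinely delicate situation is an all-$1$ row that cannot be patched; this subcase is what requires $m\ge 4$, and is why the corner $n=3$ has to be peeled off and treated by Lemma~\ref{lemma:n3case3tds}.
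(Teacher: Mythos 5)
Your proof is correct, and while the second inequality is handled exactly as in the paper (append a column with a single $1$ in a row of weight at least $3$, guaranteed by Lemma~\ref{lemma:nnvs2n+1}), your argument for the first inequality is genuinely different. The paper proceeds by contradiction: it posits an $n\times(m+1)$ $3$TDS matrix with $\gamma_{\times 3,t}(K_n \Box K_m)-1$ ones, uses $|S|<3n$ to find a column of weight at most $2$, and then runs a separate, somewhat ad hoc case analysis for weights $1$ and $2$, each time deleting that column and patching in a carefully chosen location to manufacture a too-small $n\times m$ $3$TDS matrix. You instead give a direct construction from a minimum $n\times(m+1)$ matrix, and your choice of a \emph{minimum-weight} column $j_0$ is the extra idea that makes the patching uniform: the $\kappa$-bound at $(i,j_0)$ gives $\mathfrak{r}_{S'}(i)\ge 5-c_0$, minimality gives $\mathfrak{c}_{S'}(j^*_i)\ge c_0$, and together they yield $\kappa_S(i,j^*_i)\ge(5-c_0)+(c_0+1)-2=4$ for \emph{any} value of $c_0$, with no case split on the column weight and no need for the a priori bound $|S|<3n$ from Proposition~\ref{prop:manycols}. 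Your treatment of the unpatchable all-$1$ row (which forces $m\ge4$ and hence the separate $n=3$ base case via Lemma~\ref{lemma:n3case3tds}) is a configuration the paper's argument never meets, since a weight-$\le2$ column cannot lie in an all-$1$ row of a matrix with fewer than $3n$ ones, but you handle it correctly. The one shared soft spot is the upper bound's reliance on every row of a min-$3$TDS matrix having at least two ones, which both you and the paper take from the blanket convention in Remark~\ref{rem:2onesrowocol}; you are no less rigorous than the paper there.
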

\begin{proof} Firstly we will prove the first inequality, i.e. we will prove that $\gamma_{\times 3,t}(K_n \Box K_m)\le \gamma_{\times 3,t}(K_n \Box K_{m+1})$.

If $\gamma_{\times 3,t}(K_n \Box K_{m+1})=3n$, the first inequality holds by Proposition \ref{prop:manycols}. 
Let $S$ be a $n\times (m+1)$ $3$TDS matrix with $|S|=\gamma_{\times 3,t}(K_n \Box K_m)-1<3n$. By Lemma \ref{lm:sparserow}, $S$ has no all-$0$ rows or all-$0$ columns. Furthermore, since $|S|<3n$, then
there exists $1\le j\le m+1$ such that $\mathfrak{c}_S(j)\le2$. Hence, without loss of generality, we can assume that $j=m+1$. This fact is crucial for the rest of the proof.

If $n=3$, the first inequality holds by Lemma \ref{lemma:n3case3tds}. 
Assume now $m\ge n \ge4$. If $\mathfrak{c}_S(m+1)=1$, then we can assume that the last column of $S$ is $(1,0,\dots,0)^t$. Since $\kappa_S(1,m+1)=\mathfrak{r}_S(1)+1-2\ge3$, then $\mathfrak{r}_S(1)\ge 4$. Consider $S'$ the matrix obtained from $S$ by deleting the last column. Notice that $\mathfrak{r}_{S'}(1)\ge 3$. $S'$ is a $n\times m$ matrix with $|S|-1=\gamma_{\times 3,t}(K_n \Box K_m)-2$ ones, and hence $S'$ is not a $3$TDS matrix, by definition of $\gamma_{\times 3,t}$. However, $\kappa_{S'}(i,j)=\kappa_S(i,j)\ge3$, if $2\le i\le n$ and $1\le j\le m$. Since $S'$ is not a $3$TDS matrix, there exists $1\le j\le m$ such that $\kappa_{S'}(1,j)\le2$. This implies that $\mathfrak{r}_{S'}(1)= 3$ and so that $\mathfrak{r}_S(1)= 4$. Since $m+1\ge5$, then the first row of $S$ has at least one zero in the first $m$ entries. We can construct $S''$ a $n\times m$ matrix obtained from $S$ by deleting the last column and putting exactly $1$ one in one of the zeros of the first row. By construction $S''$ is a $n\times m$ $3$TDS matrix with $|S|=\gamma_{\times 3,t}(K_n \Box K_m)-1$ ones, but this is a contradiction. 

If $\mathfrak{c}_S(m+1)=2$, then we can assume that the last column of $S$ is equal to $(1,1,0,\dots,0)^t$. Let $S'$ be the matrix obtained from $S$ by deleting the last column. $S'$ is a $n\times m$ matrix with $|S|-2=\gamma_{\times 3,t}(K_n \Box K_m)-3$ ones, and hence it is not a $3$TDS matrix. However, $\kappa_{S'}(i,j)=\kappa_S(i,j)\ge3$, if $3\le i\le n$ and $1\le j\le m$.
This implies that at least one of the first two rows of $S$ have exactly $3$ ones. Assume it is the first one. Since $m+1\ge5$, then the first  row of $S$ has at least $2$ zeros in the first $m$ entries. 
If any of the first $m$ columns of $S$ have $2$ zeros in the first two rows, we can construct $S''$ a $n\times m$ matrix obtained from $S$ by deleting the last column and putting 2 ones in the first two entries of such column. If such column does not exist, we can construct $S''$ a $n\times m$ matrix obtained from $S$ by deleting the last column and putting exactly $1$ one in one zero of the first row and, if the second row has a zero, $1$ one there. By construction $S''$ is a $n\times m$ $3$TDS matrix with at most $|S|=\gamma_{\times 3,t}(K_n \Box K_m)-1$ ones, but this is a contradiction. This proves the first inequality.

We are now ready to prove the second inequality, i.e. to prove that $\gamma_{\times 3,t}(K_n \Box K_{m+1})\le \gamma_{\times 3,t}(K_n \Box K_m)+1$.
Let $S$ be a minimum $n\times m$ $3$TDS matrix. By Lemma \ref{lemma:nnvs2n+1}, we have that $\gamma_{\times 3,t}(K_n \Box K_m)\ge 2n+2$ and hence there exists $1\le i\le n$ such that $\mathfrak{r}_S(i)\ge3$. Without loss of generality we can assume that $i=n$.
Consider now $S'$ a $n\times (m+1)$ matrix such that the first $m$ columns coincide with $S$ and the last column is $(0,\dots,0,1)^t$. By construction $S'$ is a $n\times (m+1)$ $3$TDS matrix with $|S|+1=\gamma_{\times 3,t}(K_n \Box K_m)+1$ ones.
\end{proof}

The next lemma describes the relation between min-$3$TDS matrix that have the same number of columns but whose number of rows differs by one. 

\begin{lemma}\label{lemma:3dtsgrowsbycolumn} Let $m> n\ge3$, and assume $\gamma_{\times 3,t}(K_n \Box K_m)< 3n$.
Then $$\gamma_{\times 3,t}(K_n \Box K_m)\le \gamma_{\times 3,t}(K_{n+1} \Box K_m)\le \gamma_{\times 3,t}(K_n \Box K_m)+2.$$
\end{lemma}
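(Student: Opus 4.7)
The strategy is to establish each inequality by a construction that mirrors Lemma~\ref{lemma:3dtsgrowsbyrow} but with the roles of rows and columns interchanged. The upper bound adds a single new row carrying exactly two ones; the lower bound deletes a row with two ones and repairs the result by adding at most two extra ones.

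For the upper bound $\gamma_{\times 3,t}(K_{n+1}\Box K_m)\le \gamma_{\times 3,t}(K_n\Box K_m)+2$, I would take a minimum $n\times m$ $3$TDS matrix $S$. Since $|S|<3n$ by hypothesis and each row has at least two ones by Remark~\ref{rem:2onesrowocol}, some row $i_0$ must have exactly two ones, say in columns $j_1,j_2$. The $\kappa$-bound at $(i_0,j_1)$ and $(i_0,j_2)$, combined with $\mathfrak{r}_S(i_0)=2$, forces $\mathfrak{c}_S(j_1),\mathfrak{c}_S(j_2)\ge3$. I would then construct $S'$ by appending a new $(n{+}1)$-th row with ones exactly at columns $j_1$ and $j_2$. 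The column sums only increase, so the $\kappa$-bound persists in the first $n$ rows; for the new row one checks $\kappa(n{+}1,j_k)=\mathfrak{c}_{S'}(j_k)+1\ge4$ for $k=1,2$, and $\kappa(n{+}1,j)=\mathfrak{c}_S(j)+2\ge3$ otherwise, using that $S$ has no all-$0$ columns (Lemma~\ref{lm:sparserow}). Thus $|S'|=|S|+2$.

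For the lower bound $\gamma_{\times 3,t}(K_n\Box K_m)\le \gamma_{\times 3,t}(K_{n+1}\Box K_m)$, if $\gamma_{\times 3,t}(K_{n+1}\Box K_m)\ge3n$ the inequality is immediate from the hypothesis. Otherwise, let $T$ be a minimum $(n{+}1)\times m$ $3$TDS matrix with $|T|<3n<3(n+1)$. By Lemma~\ref{lm:sparserow} there are no all-$0$ rows or columns, and since $|T|/(n+1)<3$, Remark~\ref{rem:2onesrowocol} gives a row with exactly two ones; WLOG it is row $n{+}1$, with ones in columns $1$ and $2$. The $\kappa$-bound at these two cells forces $\mathfrak{c}_T(1),\mathfrak{c}_T(2)\ge3$. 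Delete row $n{+}1$ to form $S'$, so $|S'|=|T|-2$.

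The identity $\kappa_{S'}(i,j)=\kappa_T(i,j)-s_T(n{+}1,j)$ makes the $\kappa$-bound automatic for $j\notin\{1,2\}$ and also for $j\in\{1,2\}$ when $\mathfrak{c}_T(j)\ge4$. For each $j\in\{1,2\}$ with $\mathfrak{c}_T(j)=3$, I repair by placing one additional $1$ at a cell $(i^*_j,j)$ with $1\le i^*_j\le n$ and $s_T(i^*_j,j)=0$; such a row exists because only two of the $n\ge3$ first rows contain a one in column $j$. After the repair the column sum of each repaired column becomes $3$, which (together with every row having at least two ones) restores the $\kappa$-bound in that column, and every other $\kappa$-value only grows. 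The resulting matrix $S''$ is an $n\times m$ $3$TDS matrix with $|S''|\le|T|-2+2=|T|$, so $\gamma_{\times 3,t}(K_n\Box K_m)\le|T|=\gamma_{\times 3,t}(K_{n+1}\Box K_m)$. The main obstacle is this repair step: verifying uniformly across sub-cases (each of $\mathfrak{c}_T(1),\mathfrak{c}_T(2)$ being $3$ or $\ge4$, and the possible coincidence $i^*_1=i^*_2$) that the at-most-two added ones restore the bound in the affected columns while creating no new violation elsewhere.
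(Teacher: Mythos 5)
Your proposal is correct and follows essentially the same route as the paper: for the upper bound both proofs append a duplicate of a row having exactly two ones, and for the lower bound both delete such a row from a minimum $(n+1)\times m$ matrix and repair the (at most two) affected columns with at most two new ones. The only cosmetic difference is that you argue the first inequality directly from a minimum $(n+1)\times m$ matrix, while the paper phrases it as a contradiction starting from a hypothetical matrix with $\gamma_{\times 3,t}(K_n \Box K_m)-1$ ones; your identification of $\mathfrak{c}_T(j)=3$ as the exact condition requiring repair is a slightly cleaner bookkeeping of the paper's case analysis.
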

\begin{proof} 
Firstly we will prove the first inequality, i.e. we will prove that $\gamma_{\times 3,t}(K_n \Box K_m)\le \gamma_{\times 3,t}(K_{n+1} \Box K_m)$.

Let $S$ be a $(n+1)\times m$ $3$TDS matrix with $|S|=\gamma_{\times 3,t}(K_n \Box K_m)-1$. Since $|S|<3n$, by Remark \ref{rem:2onesrowocol} there exists $1\le i\le n+1$ such that $\mathfrak{r}_S(i)=2$. Without loss of generality, we can assume that $i=n+1$ and that the last row of $S$ is $(0,\dots,0,1,1)$. 
Consider $S'$ the matrix obtained from $S$ by deleting the last row. $S'$ is a $n\times m$ matrix with $|S|-2=\gamma_{\times 3,t}(K_n \Box K_m)-3$ ones, and then it is not a $3$TDS matrix. However, $\kappa_{S'}(i,j)=\kappa_S(i,j)\ge3$, if $1\le i\le n$ and $1\le j\le m-2$. This implies that at least one of the last two columns of $S$ have exactly $3$ ones. Assume that this column is the last of $S$. Since $n+1\ge4$, the last column of $S$ has at least one zero in the first $n$ entries. If any of the first $n$ rows of $S$ have $2$ zeros in the last two columns, we can construct $S''$ a $n\times m$ matrix obtained from $S$ by deleting the last row and putting $2$ ones in the last two entries of such row. If such row does not exist but the penultimate column of $S$ has a zero, we can construct $S''$ a $n\times m$ matrix obtained from $S$ by deleting the last row and putting exactly $1$ one in one zero of the penultimate column and exactly $1$ one in one zero of the last column. If the penultimate column has no zero, we can construct $S''$ a $n\times m$ matrix obtained from $S$ by deleting the last row and putting exactly $1$ one in one zero of the last column. By construction $S''$ is a $n\times m$ $3$TDS matrix with at most $|S|=\gamma_{\times 3,t}(K_n \Box K_m)-1$ ones, but this is a contradiction. This proves the first inequality.

We are now ready to prove the second inequality, i.e. to prove that $\gamma_{\times 3,t}(K_{n+1} \Box K_m)\le \gamma_{\times 3,t}(K_n \Box K_m)+2$.
Let $S$ be a minimum $n\times m$ $3$TDS matrix. By assumption, we have that $\gamma_{\times 3,t}(K_n \Box K_m)< 3n$ and hence there exists $1\le i\le n$ such that $\mathfrak{r}_S(i)=2$. Without loss of generality we can assume that $i=n$ and that the last row of $S$ coincides with $(0,\dots,0,1,1)$.
Consider now $S'$ a $(n+1)\times m$ matrix such that the first $n$ rows coincide with $S$ and the last row is $(0,\dots,0,1,1)$. By construction $S'$ is a $(n+1)\times m$ $3$TDS matrix with $|S|+2=\gamma_{\times 3,t}(K_n \Box K_m)+2$ ones.
\end{proof}

We now describes the relation between square min-$3$TDS matrix  whose number of rows and columns both differ by one. 

\begin{lemma}\label{lemma:3dtsgrowsbydiagonal} Let $n\ge3$. Then 
$$\gamma_{\times 3,t}(K_n \Box K_n)+2 \le \gamma_{\times 3,t}(K_{n+1} \Box K_{n+1})\le \gamma_{\times 3,t}(K_n \Box K_n)+3.$$
\end{lemma}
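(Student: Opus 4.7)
The plan is to prove the two inequalities separately. The upper bound will follow from chaining Lemmas~\ref{lemma:3dtsgrowsbyrow} and~\ref{lemma:3dtsgrowsbycolumn} through the intermediate rook's graph $K_n \Box K_{n+1}$. Concretely, Lemma~\ref{lemma:3dtsgrowsbyrow} applied with $m=n$ gives $\gamma_{\times 3,t}(K_n \Box K_{n+1}) \le \gamma_{\times 3,t}(K_n \Box K_n) + 1$, and then Lemma~\ref{lemma:3dtsgrowsbycolumn} applied with $m=n+1>n$ gives $\gamma_{\times 3,t}(K_{n+1} \Box K_{n+1}) \le \gamma_{\times 3,t}(K_n \Box K_{n+1}) + 2$, so composing produces $\gamma_{\times 3,t}(K_{n+1} \Box K_{n+1}) \le \gamma_{\times 3,t}(K_n \Box K_n) + 3$. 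To invoke Lemma~\ref{lemma:3dtsgrowsbycolumn} one must verify the hypothesis $\gamma_{\times 3,t}(K_n \Box K_{n+1}) < 3n$, which I would check case by case: for $n=3$ by Lemma~\ref{lemma:n3case3tds}, for $4 \le n \le 10$ by the equality $\gamma_{\times 3,t}(K_n \Box K_n) = 2n+2$ from Remark~\ref{rem:smallcasesdiag} (which yields $\gamma_{\times 3,t}(K_n \Box K_{n+1}) \le 2n+3 < 3n$), and for $n \ge 6$ directly from the construction of Proposition~\ref{prop:compclass2} combined with the count in Proposition~\ref{prop:numberofones}, which give $\gamma_{\times 3,t}(K_n \Box K_{n+1}) \le \lceil(11n+3)/5\rceil + 1 < 3n$.

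For the lower bound I would take a min-$3$TDS matrix $S$ of $K_{n+1} \Box K_{n+1}$, set $N = |S|$, and show how to construct an $n \times n$ $3$TDS matrix with at most $N-2$ ones. If $N \ge 3(n+1)$, then Proposition~\ref{prop:manycols} gives $\gamma_{\times 3,t}(K_n \Box K_n) \le 3n \le N-3$ and the inequality is immediate. Otherwise $N < 3(n+1)$, so by Remark~\ref{rem:2onesrowocol} there exists a row $i^*$ with $\mathfrak{r}_S(i^*) = 2$, with ones at columns $j_1, j_2$, and the $\kappa$-bound at $(i^*, j_1)$ and $(i^*, j_2)$ forces $\mathfrak{c}_S(j_1), \mathfrak{c}_S(j_2) \ge 3$. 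Deleting row $i^*$ yields an $n \times (n+1)$ matrix $S'$ with $|S'| = N-2$ whose $\kappa$-bound may only fail at positions $(i,j)$ with $j \in \{j_1, j_2\}$. If $S'$ is already a $3$TDS matrix of $K_n \Box K_{n+1}$, then Lemma~\ref{lemma:3dtsgrowsbyrow} gives $\gamma_{\times 3,t}(K_n \Box K_n) \le \gamma_{\times 3,t}(K_n \Box K_{n+1}) \le N-2$. Otherwise a short direct computation shows that each offending $(i, j)$ must satisfy $j \in \{j_1, j_2\}$, $s_{ij} = 1$, $\mathfrak{r}_S(i) = 2$ and $\mathfrak{c}_S(j) = 3$; one then deletes such an offending row together with one of $j_1, j_2$, using the slack $\mathfrak{c}_S(j_1) + \mathfrak{c}_S(j_2) \ge 6$ to absorb any local repairs needed at the other column while still removing a net total of at least two ones.

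The hard part will be the case analysis at the very end of the lower bound. The $2$-weight rows of $S$ can interact with the small columns $j_1$ and $j_2$ in several different configurations (for instance several $2$-weight rows may share column $j_1$, or an offending row may itself hit both $j_1$ and $j_2$), and in each configuration one has to choose carefully which row and column of $S$ to delete, and possibly where to add back a compensating $1$, so that the resulting $n \times n$ matrix is indeed a valid $3$TDS matrix of $K_n \Box K_n$ with at most $N-2$ ones.
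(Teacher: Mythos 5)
Your upper bound is complete and is exactly the paper's argument: chain Lemma~\ref{lemma:3dtsgrowsbyrow} with Lemma~\ref{lemma:3dtsgrowsbycolumn} through $K_n\Box K_{n+1}$, after checking $\gamma_{\times 3,t}(K_n\Box K_{n+1})<3n$ from Lemma~\ref{lemma:n3case3tds}, Remark~\ref{rem:smallcasesdiag} (equivalently Proposition~\ref{prop:numberofones} and Remark~\ref{rem:n45onesrow}). Your lower bound also has the same skeleton as the paper's: locate a $2$-weight row (which exists since one may assume $|S|<3(n+1)$), note that its two columns have at least $3$ ones, delete the row, and observe that any surviving violation of the $\kappa$-bound sits at a position $(i,j)$ with $j\in\{j_1,j_2\}$, $s_{ij}=1$, $\mathfrak{r}_S(i)=2$ and $\mathfrak{c}_S(j)=3$; all of these reductions are correct.

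The gap is that the final step --- ``deletes such an offending row together with one of $j_1,j_2$, using the slack \dots to absorb any local repairs'' --- is not a proof but a promissory note, and it is precisely where essentially all of the work in the paper's proof lies. Deleting a $2$-weight row and a $3$-weight column removes four ones, so only two may be added back; but the deletion damages not only the surviving column of $\{j_1,j_2\}$ but also every row that met the deleted column, and whether two added ones suffice depends delicately on the shape of the surviving column $w$ and on $\mathfrak{r}(2)$. The paper's case analysis splits on $\mathfrak{c}_{S'}(n)\ge 3$ versus $\mathfrak{c}_{S'}(n)=2$, and then on $w\in\{(1,0,\dots)^t,(0,1,\dots)^t,(1,1,0,\dots,0)^t,\dots\}$ and on $\mathfrak{r}_{S'}(2)\in\{2,3,\ge 4, n+1\}$; in one subcase the repair is not ``add back ones'' at all but requires moving a one (adding three and deleting one), and in several subcases the existence of a legal place for the compensating ones rests on a counting argument from $|S|<3(n+1)$ that you have not supplied. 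Note also that this analysis only goes through for $n\ge 5$ (it repeatedly needs rows and columns with enough zeros); the paper disposes of $n=3,4$ separately via Remark~\ref{rem:smallcasesdiag}, a reduction your outline omits. Until the configurations are enumerated and a valid $n\times n$ $3$TDS matrix with at most $N-2$ ones is exhibited in each one, the lower bound is not established.
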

\begin{proof} 
Firstly, we will prove the first inequality, i.e. we will prove that $\gamma_{\times 3,t}(K_n \Box K_n)+2\le \gamma_{\times 3,t}(K_{n+1} \Box K_{n+1})$.

If $n=3,4$, the first inequality follows from Remark \ref{rem:smallcasesdiag}.
Assume $n\ge5$. Suppose there exists $S$ a $(n+1)\times (n+1)$ $3$TDS matrix with $|S|=\gamma_{\times 3,t}(K_n \Box K_n)+1$. By Remark \ref{rem:2onesrowocol} and $\gamma_{\times 3,t}(K_n \Box K_n)+1<3(n+1)$, there exists $1\le i\le n+1$ such that $\mathfrak{r}_S(i)=2$. Without loss of generality we can assume that $i=n+1$ and that the last row of $S$ coincides with $(0,\dots,0,1,1)$. Let now $S'$ be the $n\times (n+1)$ matrix obtained from $S$ by deleting the last row. $S'$ has $|S|-2=\gamma_{\times 3,t}(K_n \Box K_n)-1$ ones, and hence it is not a $3$TDS by Lemma \ref{lemma:3dtsgrowsbyrow}. However, $\kappa_{S'}(i,j)=\kappa_S(i,j)\ge3$ for all $1\le i \le n$ and $1\le j\le n-1$. Since $S$ is a $3$TDS matrix, if $n\le j\le n+1$, then $\kappa_S(n+1,j)=2+\mathfrak{c}_{S}(j)-2=\mathfrak{c}_{S}(j)\ge3$, and hence $\mathfrak{c}_{S'}(j)\ge2$. However, since $S'$ is not a $3$TDS matrix, there exist $1\le i\le n$ and $n\le j\le n+1$ such that $\kappa_{S'}(i,j)=2$, and hence  $\mathfrak{c}_{S'}(n)=2$ or $\mathfrak{c}_{S'}(n+1)=2$. Without loss of generality, we can assume that $(i,j)=(1,n+1)$, and hence that the last column of $S'$ is $(1,1,0,\dots,0)^t$ and $\mathfrak{r}_{S'}(1)=2$. 

Assume that $\mathfrak{c}_{S'}(n)\ge3$. If in $S'$ there is a column with $2$ zeros in the first two entries, we can construct $S''$ a $n\times n$ matrix obtained from $S'$ by deleting the last column and putting $2$ ones in the first two entries of such column. If such column does not exist, then $\mathfrak{r}_{S'}(2)\ge4$. Furthermore, since $|S|<3(n+1)$, there must exists a column with $2$ zeros, one in the first entry and the second one in the $j$-th position, for some $j\ge3$. We can construct $S''$ a $n\times n$ matrix obtained from $S'$ by deleting the last column and putting $1$ one in the first entry and $1$ one in the $j$-th position of such column. By construction $S''$ is a $n\times n$ $3$TDS matrix with $|S'|=\gamma_{\times 3,t}(K_n \Box K_n)-1$ ones, but this is a contradiction.

Assume now that $\mathfrak{c}_{S'}(n)=2$. Denote by $w$ the penultimate column of $S'$. There are four cases.
If $w$ has $2$ zeros in the first two entries, then we can construct $S''$ a $n\times n$ matrix obtained from $S'$ by deleting the last column and putting $2$ ones in the first two entries of $w$. By construction $S''$ is a $n\times n$ $3$TDS matrix with $|S'|=\gamma_{\times 3,t}(K_n \Box K_n)-1$ ones, but this is a contradiction.

If $w=(1,0,\dots)^t$, then the first row of $S'$ is equal to $(0,\dots,0,1,1)$. Since $|S|<3(n+1)$, there must exists $1\le j\le n-1$ such that $\mathfrak{c}_{S'}(j)=\mathfrak{c}_S(j)\ge2$. We can construct $S''$ a $n\times n$ matrix obtained from $S'$ by deleting the last column and putting $1$ one in position $(1,j)$ and $1$ one in position $(2,n)$. By construction $S''$ is a $n\times n$ $3$TDS matrix with $|S'|=\gamma_{\times 3,t}(K_n \Box K_n)-1$ ones, but this is impossible.

Assume $w=(0,1,\dots)^t$. If $\mathfrak{r}_{S'}(2)=2$, then the second row of $S'$ is equal to $(0,\dots,0,1,1)$. Since $|S|<3(n+1)$, there must exists $1\le j\le n-1$ such that $\mathfrak{c}_{S'}(j)\ge2$. We can construct $S''$ a $n\times n$ matrix obtained from $S'$ by deleting the last column and putting $1$ one in position $(1,n)$ and $1$ one in position $(2,j)$. By construction $S''$ is a $n\times n$ $3$TDS matrix with $|S'|=\gamma_{\times 3,t}(K_n \Box K_n)-1$ ones, but this is a contradiction. 
If $\mathfrak{r}_{S'}(2)\ge3$, but there is at least one zero in the second row of $S'$, we can construct $S''$ a $n\times n$ matrix obtained from $S'$ by deleting the last column and putting $1$ one in position $(1,n)$ and exactly $1$ one in one zero of the second row. By construction $S''$ is a $n\times n$ $3$TDS matrix with $|S'|=\gamma_{\times 3,t}(K_n \Box K_n)-1$ ones, but this is a contradiction. 
If $\mathfrak{r}_{S'}(2)=n+1$, we can construct $S''$ a $n\times n$ matrix obtained from $S'$ by deleting the last column and putting $1$ one in position $(1,n)$ and exactly $1$ one in one zero of the first row. By construction $S''$ is a $n\times n$ $3$TDS matrix with $|S'|=\gamma_{\times 3,t}(K_n \Box K_n)-1$ ones, and this is a contradiction.

If $w=(1,1,0,\dots,0)^t$, then the first row of $S'$ is equal to $(0,\dots,0,1,1)$. Since $|S|<3(n+1)$, there must exists $1\le j\le n-1$ such that $\mathfrak{c}_{S'}(j)\ge2$, and we can assume that $S'$ has ones in positions $(p,j)$ and $(q,j)$, for some $2\le p< q \le n$. If $\mathfrak{r}_{S'}(2)=2$, then the second row of $S'$ is equal to $(0,\dots,0,1,1)$. This implies that the first two entries of the $j$-th column of $S'$ are zero. We can construct $S''$ a $n\times n$ matrix obtained from $S'$ by deleting the last column and putting $1$ one in position $(1,j)$, $1$ one in position $(2,j)$, $1$ one in position $(p,n)$ and putting $1$ zero in position $(p,j)$. By construction $S''$ is a $n\times n$ $3$TDS matrix with $|S'|=\gamma_{\times 3,t}(K_n \Box K_n)-1$ ones, however this is a contradiction. 
If $\mathfrak{r}_{S'}(2)=3$, then, without loss of generality, we can assume that the second row of $S'$ is equal to $(0,\dots,0,1,1,1)$. Since $\kappa_{S'}(2,n-1)=\kappa_{S}(2,n-1)\ge3$, this implies that $\mathfrak{c}_{S'}(n-1)\ge2$, and we can assume that $S'$ has ones in positions $(2,n-1)$ and $(i,n-1)$, with $3\le i\le n$. We can construct $S''$ a $n\times n$ matrix obtained from $S'$ by deleting the last column and putting $1$ one in position $(1,n-1)$ and $1$ one in position $(i,n)$. By construction $S''$ is a $n\times n$ $3$TDS matrix with $|S'|=\gamma_{\times 3,t}(K_n \Box K_n)-1$ ones, but this is a contradiction. 
If $\mathfrak{r}_{S'}(2)\ge4$, then, without loss of generality, we can assume that the second row of $S'$ is equal to $(\dots,1,1,1,1)$. We can construct $S''$ a $n\times n$ matrix obtained from $S'$ by deleting the last column and putting $1$ one in position $(1,n-1)$ and $1$ one in position $(1,n-2)$. By construction $S''$ is a $n\times n$ $3$TDS matrix with $|S'|=\gamma_{\times 3,t}(K_n \Box K_n)-1$ ones, but this is a contradiction.

We are now ready to prove the second inequality, i.e. to prove that $\gamma_{\times 3,t}(K_{n+1} \Box K_{n+1})\le \gamma_{\times 3,t}(K_n \Box K_n)+3$.
By Proposition \ref{prop:numberofones}, Remark \ref{rem:n45onesrow} and Lemma \ref{lemma:n3case3tds}, $\gamma_{\times 3,t}(K_{n} \Box K_{n+1})<3n$. By Lemmas \ref{lemma:3dtsgrowsbyrow} and \ref{lemma:3dtsgrowsbycolumn},
$\gamma_{\times 3,t}(K_{n+1} \Box K_{n+1})\le \gamma_{\times 3,t}(K_{n} \Box K_{n+1})+2 \le \gamma_{\times 3,t}(K_n \Box K_n)+3.$
\end{proof}

\begin{remark}\label{rem:onesbycolumnsvsrowk} Let $m\ge n\ge3$ and $S$ a $n\times m$ $3$TDS matrix with no all-$0$ columns or all-$0$ row. Let $1\le k\le n-1$ and assume that $S$ has $2n+k$ ones. Counting by column, this implies that $S$ has at most $k+2\lfloor\frac{k}{2}\rfloor$ columns that contain a one belonging to a row with at least $3$ ones. Hence all the other columns contain at least $1$ one belonging to a row with $2$ ones, and so such columns all have at least $3$ ones. This shows that $|S|\ge(k+2\lfloor\frac{k}{2}\rfloor) +3(m-k-2\lfloor\frac{k}{2}\rfloor)=3m-2k-4\lfloor\frac{k}{2}\rfloor$.
\end{remark}

\section{The square case}

We consider the case when $n=m$ and we give an explicit formula for $\gamma_{\times 3,t}(K_n \Box K_n)$ that is independent from the component structure of square $3$TDS matrices.
Notice that our formula coincides with the number of ones of the square matrices appearing in Table \ref{ta:smallk3}.

\begin{theo}\label{theo:thesquarecase} For any integer $n \ge 3$, let $n \equiv r \pmod {10}$, where $0 \le r \le 9$. Then
$$
\gamma_{\times 3,t}(K_n \Box K_n)=
\begin{cases}
  2n+2\lfloor \frac{n}{10} \rfloor+2  & \text{ if $r=4,5,6,7,8,9;$}  \\
  2n+2\lfloor \frac{n}{10} \rfloor+\lceil \frac{r}{3}\rceil    & \text{ if $r=0,1,2,3$ and $n\ne3;$} \\
  8 & \text{ if $n=3.$}
  \end{cases}
$$
\end{theo}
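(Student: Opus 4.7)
The plan is to split the argument into three parts: the base cases $3\le n\le 10$, the upper bound for $n\ge 11$, and the matching lower bound for $n\ge 11$.

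For $n=3$ the statement is exactly Lemma~\ref{lemma:n3case3tds}. For $4\le n\le 10$ it follows from Remark~\ref{rem:smallcasesdiag}, provided one checks that in each of these cases the piecewise formula of the theorem returns $2n+2$: indeed $\lfloor n/10\rfloor=0$ for $3\le n\le 9$, and for $n=10$ the formula yields $2\cdot 10+2\cdot 1+\lceil 0/3\rceil=22=2n+2$.

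For the upper bound when $n\ge 11$, I would apply Proposition~\ref{prop:compclass2} with $m=n$ (permissible since $n\ge 7$, so $(n,m)\ne(6,6)$) to produce an $n\times n$ $3$TDS matrix with the prescribed component structure, and then use Proposition~\ref{prop:numberofones} to count its ones. Writing $n=10q+r$ with $0\le r\le 9$ and setting $m=n$, the residue parameter appearing in Proposition~\ref{prop:numberofones} satisfies $k\equiv -n\pmod{10}$; a short case-by-case check through the ten residues then shows that the resulting count coincides with the formula of the theorem.

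For the lower bound when $n\ge 11$, let $S$ be any $n\times n$ $3$TDS matrix and write $|S|=2n+k$. By Lemma~\ref{lm:sparserow}, if $S$ contains an all-$0$ row or column then $|S|\ge 3n$, which already strictly exceeds the target value of the formula for every $n\ge 11$; so I may assume $S$ has no all-$0$ row and no all-$0$ column, and by Remark~\ref{rem:2onesrowocol} that every row of $S$ contains at least two ones. Lemma~\ref{lemma:nnvs2n+1} forces $k\ge 2$, and the regime $k\ge n$ may be dismissed for the same reason ($|S|\ge 3n$ suffices). Remark~\ref{rem:onesbycolumnsvsrowk} then applies with $m=n$ and yields
\begin{equation*}
3k+4\left\lfloor\frac{k}{2}\right\rfloor\ge n.
\end{equation*}
Splitting according to the parity of $k$ and writing $n=10q+r$, the smallest admissible $k$ turns out to be $2q$ when $r=0$, equal to $2q+1$ when $r\in\{1,2,3\}$, and equal to $2q+2$ when $r\in\{4,5,6,7,8,9\}$; in each case $2n+k$ coincides exactly with the formula of the theorem.

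The main obstacle I anticipate is the final matching step: converting the single inequality coming out of Remark~\ref{rem:onesbycolumnsvsrowk} into the ten residue-class bounds required by the theorem, and confirming that the six sub-cases of Proposition~\ref{prop:numberofones} together cover every residue of $n$ when $m=n$. Both reductions are purely arithmetic once the analysis is organised by $n\bmod 10$ and by the parity of the excess $k=|S|-2n$, but each residue must be verified separately to ensure that the upper bound constructed from Proposition~\ref{prop:compclass2} and the lower bound derived from Remark~\ref{rem:onesbycolumnsvsrowk} meet exactly.
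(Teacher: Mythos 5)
Your proposal is correct, and for the lower bound it takes a genuinely different route from the paper. The paper proves the lower bound by induction on $n$ via Lemma~\ref{lemma:3dtsgrowsbydiagonal}: the inequality $\gamma_{\times 3,t}(K_{n+1}\Box K_{n+1})\ge \gamma_{\times 3,t}(K_n\Box K_n)+2$ pins down the value for the eight residues where the formula increases by $2$, and Remark~\ref{rem:onesbycolumnsvsrowk} is invoked only for the two transitions ($r=0$ and $r=3$) where the formula jumps by $3$. You instead apply Remark~\ref{rem:onesbycolumnsvsrowk} directly to an arbitrary putative $n\times n$ $3$TDS matrix with $2n+k$ ones for every $n\ge 11$, obtaining $3k+4\lfloor k/2\rfloor\ge n$ and minimising over the parity of $k$; I checked that the resulting minimal $k$ ($2q$ for $r=0$, $2q+1$ for $r\in\{1,2,3\}$, $2q+2$ otherwise, with $n=10q+r$) does match the formula in all ten residue classes, and your reductions to the cases $k\le 1$, $k\ge n$, and all-zero rows/columns are handled correctly via Lemmas~\ref{lm:sparserow} and~\ref{lemma:nnvs2n+1}. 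Your approach buys a substantially shorter argument, since it bypasses the long case analysis proving the first inequality of Lemma~\ref{lemma:3dtsgrowsbydiagonal} entirely; the trade-off is that your proof leans on Remark~\ref{rem:onesbycolumnsvsrowk} for every residue class rather than just two, so the whole lower bound now rests on that remark's rather tersely justified counting claim (that at most $k+2\lfloor k/2\rfloor$ columns meet a row with at least $3$ ones), whereas the paper's induction is insulated from it except at $r=0,3$. The upper bound and the base cases $3\le n\le 10$ are handled exactly as in the paper.
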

\begin{proof}If $n=3$, then $\gamma_{\times 3,t}(K_n \Box K_n)=8$ by Lemma \ref{lemma:n3case3tds}. Assume $n\ge4$.
  Since the description of Proposition \ref{prop:numberofones} and Remark \ref{rem:n45onesrow} coincides with our claim when $n=m$, we clearly have that
$$
\gamma_{\times 3,t}(K_n \Box K_n)\le
\begin{cases}
  2n+2\lfloor \frac{n}{10} \rfloor+2  & \text{ if $r=4,5,6,7,8,9;$}  \\
  2n+2\lfloor \frac{n}{10} \rfloor+\lceil \frac{r}{3}\rceil    & \text{ if $r=0,1,2,3$ and $n\ne3.$} 
  \end{cases}
$$

Notice that when $n \equiv r \pmod {10}$ and $r=1,2$, then $(2n+2\lfloor \frac{n}{10} \rfloor+\lceil \frac{r}{3}\rceil)+2=2(n+1)+2\lfloor \frac{n+1}{10} \rfloor+\lceil \frac{r}{3}\rceil$. This implies that if $\gamma_{\times 3,t}(K_n \Box K_n)=2n+2\lfloor \frac{n}{10} \rfloor+\lceil \frac{r}{3}\rceil$, then by Lemma \ref{lemma:3dtsgrowsbydiagonal}, $\gamma_{\times 3,t}(K_{n+1} \Box K_{n+1})=2(n+1)+2\lfloor \frac{n+1}{10} \rfloor+\lceil \frac{r}{3}\rceil$. 
Similarly, when $r=4,5,6,7,8$, then $(2n+2\lfloor \frac{n}{10} \rfloor+2)+2=2(n+1)+2\lfloor \frac{n+1}{10} \rfloor+2$. This implies that if $\gamma_{\times 3,t}(K_n \Box K_n)=2n+2\lfloor \frac{n}{10} \rfloor+2$, then by Lemma \ref{lemma:3dtsgrowsbydiagonal}, $\gamma_{\times 3,t}(K_{n+1} \Box K_{n+1})=2(n+1)+2\lfloor \frac{n+1}{10} \rfloor+2$.  
Moreover, when $r=9$, $(2n+2\lfloor \frac{n}{10} \rfloor+2)+2=2(n+1)+2\lfloor \frac{n+1}{10} \rfloor$. This implies that if $\gamma_{\times 3,t}(K_n \Box K_n)=2n+2\lfloor \frac{n}{10} \rfloor+2$, then by Lemma \ref{lemma:3dtsgrowsbydiagonal}, $\gamma_{\times 3,t}(K_{n+1} \Box K_{n+1})=2(n+1)+2\lfloor \frac{n+1}{10} \rfloor$.
However, when $r=0$, then $(2n+2\lfloor \frac{n}{10} \rfloor)+3=2(n+1)+2\lfloor \frac{n+1}{10} \rfloor+1$, and hence in this situation we need to prove that $\gamma_{\times 3,t}(K_{n+1} \Box K_{n+1})=\gamma_{\times 3,t}(K_n \Box K_n)+3$. Similarly when $r=3$, $(2n+2\lfloor \frac{n}{10} \rfloor+1)+3=2(n+1)+2\lfloor \frac{n+1}{10} \rfloor+2$, and hence also in this situation we need to prove that $\gamma_{\times 3,t}(K_{n+1} \Box K_{n+1})=\gamma_{\times 3,t}(K_n \Box K_n)+3$. By Lemma \ref{lemma:3dtsgrowsbydiagonal}, it is enough to show that if $r=0,3$, then $\gamma_{\times 3,t}(K_{n+1} \Box K_{n+1})>\gamma_{\times 3,t}(K_n \Box K_n)+2$.

Assume $r=0$. Suppose there exists $S$ a $(n+1)\times (n+1)$ $3$TDS matrix with $|S|=\gamma_{\times 3,t}(K_n \Box K_n)+2=2(n+1)+k$, where $k=2\lfloor \frac{n}{10}\rfloor$. By Remark \ref{rem:onesbycolumnsvsrowk}, $|S|\ge 3(n+1)-2k-4\lfloor\frac{k}{2}\rfloor$. Notice that since $r=0$, then $k=\frac{n}{5}$ and it is an even integer. This implies that $|S|\ge3(n+1)-4k$. However, since $k=\frac{n}{5}$, then $2(n+1)+k<3(n+1)-4k$ and hence $S$ is not a $3$TDS matrix. 

Assume now $r=3$. Suppose there exists $S$ a $(n+1)\times (n+1)$ $3$TDS matrix with $|S|=\gamma_{\times 3,t}(K_n \Box K_n)+2=2(n+1)+k$, where $k=2\lfloor \frac{n}{10}\rfloor+1$. By Remark \ref{rem:onesbycolumnsvsrowk}, $|S|\ge 3(n+1)-2k-4\lfloor\frac{k}{2}\rfloor$. Notice that since $r=3$, then $k=\frac{n-3}{5}+1$ and it is an odd integer. This implies that $|S|\ge 3(n+1)-4k+2$. However, since $k=\frac{n-3}{5}+1$, then $2(n+1)+k<3(n+1)-4k+2$ and hence $S$ is not a $3$TDS matrix. 
\end{proof}

\section{The general case}
We give a formula for $\gamma_{\times 3,t}(K_n \Box K_m)$ that coincides with the number of ones of the $3$TDS matrices in Table \ref{ta:smallk3}, but the argument is independent of the shape of the components in a $3$TDS matrix.

\begin{theo}\label{theo:thegeneralcase}
Let $m\ge n \ge 1$. Assume $(n,m)\notin\{(1,1),(1,2),(1,3),$$(2,2)\}$ and $2n\equiv  3m+k \pmod {10}$, where $0 \le k \le 9$. Then
$$
\gamma_{\times 3,t}(K_n \Box K_m)=
\begin{cases}
  4 & \text{$n=1$ and $m\ge4;$} \\
  6 & \text{$n=2$ and $m\ge3;$} \\
  8 & \text{$n=3$ and $m=3,4;$} \\
  3n & \text{if $m\ge \lfloor \frac{7n-1}{3} \rfloor-1;$} \\
  \lceil \frac{8n+3m}{5} \rceil  & \text{ if $k=0,1,2,3,4,7,8,9;$}  \\
  \lceil \frac{8n+3m}{5} \rceil +1 & \text{ if $k=5,6.$} 
  \end{cases}
$$
\end{theo}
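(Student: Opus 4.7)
The plan is to combine the small-$n$ lemmas for $n\le 3$, the explicit construction of Section~3 for upper bounds, the bound $\gamma\le 3n$ of Proposition~\ref{prop:manycols}, and the counting argument of Remark~\ref{rem:onesbycolumnsvsrowk} for lower bounds. The cases $n=1$ and $n=2$ reduce to a direct inspection of small $(0,1)$-matrices (yielding $4$ and $6$, respectively), and $n=3$ is exactly Lemma~\ref{lemma:n3case3tds}.

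For $n\ge 4$ the upper bound $\lceil (8n+3m)/5\rceil$, or $\lceil (8n+3m)/5\rceil+1$ when the theorem's residue is $k\in\{5,6\}$, is realized by the matrices produced in Proposition~\ref{prop:compclass2} and counted in Proposition~\ref{prop:numberofones} (for $n\ge 6$ with $(n,m)\ne(6,6)$), while $n\in\{4,5\}$ and $(6,6)$ are covered by Table~\ref{ta:smallk3} together with Remark~\ref{rem:n45onesrow}. The bound $\gamma\le 3n$ is always supplied by Proposition~\ref{prop:manycols}, so the upper-bound side amounts to bookkeeping across the case distinction.

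For the lower bound, let $S$ be an $n\times m$ $3$TDS matrix with $|S|<3n$. By Lemma~\ref{lm:sparserow} and Remark~\ref{rem:2onesrowocol} I may assume that $S$ has no all-$0$ row or column and at least two ones per row, so $|S|=2n+k'$ with $1\le k'\le n-1$. Remark~\ref{rem:onesbycolumnsvsrowk} then forces $3k'+4\lfloor k'/2\rfloor\ge 3m-2n$. Setting $c=3m-2n$ and splitting on the parity of $k'$, a routine tabulation shows that the smallest admissible $k'$ equals $\lceil c/5\rceil$ in general, and $\lceil c/5\rceil+1$ precisely when $c\equiv 4,5\pmod{10}$. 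Since $2n+\lceil c/5\rceil=\lceil(8n+3m)/5\rceil$ and the residues $c\equiv 4,5\pmod{10}$ translate, via $k\equiv -c\pmod{10}$, to the theorem's $k\in\{5,6\}$, this exactly reproduces the formula cases of the statement.

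For the $3n$ regime, feeding $k'=n-1$ into the same inequality gives $\gamma\ge 3n$ whenever $3m>5n-3+4\lfloor(n-1)/2\rfloor$, which coincides with the theorem's threshold $\lfloor(7n-1)/3\rfloor-1$ for every even $n$ and every odd $n\equiv 1\pmod 6$, but lags by one step when $n$ is odd with $n\equiv 3,5\pmod 6$. For those boundary pairs (the smallest being $(5,10)$, $(9,19)$, $(11,24)$) the Remark bound is attained only in the extremal configuration in which $n-1$ rows have exactly three ones and one row exactly two; tracking the constraints $\mathfrak{c}(j)\ge 3$ at the two columns meeting the $2$-ones row, together with $\mathfrak{c}(j)\ge 2$ at every column meeting a $3$-ones row, then forces strictly more all-$0$ columns than Lemma~\ref{lm:sparserow} permits, ruling out $|S|=3n-1$. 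I expect this boundary structural refinement to be the main technical obstacle; the rest of the proof is a routine matching of the resulting lower bound with the case-by-case upper bounds.
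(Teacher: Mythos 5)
Your lower-bound strategy is genuinely different from the paper's. The paper runs a double induction: along the diagonal (Lemma~\ref{lemma:3dtsgrowsbydiagonal}) to establish Theorem~\ref{theo:thesquarecase}, and then column by column via the monotonicity Lemma~\ref{lemma:3dtsgrowsbyrow}, with Remark~\ref{rem:onesbycolumnsvsrowk} used only to decide whether the value increments at each step. You instead apply Remark~\ref{rem:onesbycolumnsvsrowk} directly to a putative matrix with $2n+k'$ ones and minimize $k'$ subject to $3k'+4\lfloor k'/2\rfloor\ge 3m-2n$. I checked your tabulation: writing $c=3m-2n$, the minimal admissible $k'$ is $\lceil c/5\rceil$, jumping to $\lceil c/5\rceil+1$ exactly when $c\equiv 4,5\pmod{10}$, and $2n+\lceil c/5\rceil=\lceil (8n+3m)/5\rceil$ with $c\equiv -k\pmod{10}$, so for all the residue cases your argument is correct and dispenses with the square case and both growth lemmas entirely. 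That is a real simplification, since the inequality is monotone in $k'$ and therefore excludes every value below the minimum at once.

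The gap is in your treatment of the $3n$ regime. At the boundary pairs you single out, $(5,10)$, $(9,19)$, $(11,24)$, \dots, no structural refinement can rule out $|S|=3n-1$, because such matrices exist: Table~\ref{ta:smallk3} itself exhibits a $5\times 10$ $3$TDS matrix with $14=3\cdot 5-1$ ones (three rows equal to $(0,0,0,0,0,0,0,0,1,1)$, one row $(0,0,0,0,1,1,1,1,0,0)$, one row $(1,1,1,1,0,0,0,0,0,0)$), and four $J(1,4)$ components together with one $D(5,3)$ give a $9\times 19$ $3$TDS matrix with $26=3\cdot 9-1$ ones. At these pairs the branches of the theorem overlap and disagree, and it is the branch $\lceil (8n+3m)/5\rceil$ that is correct; indeed the paper's own induction (its Case~X applied to $(5,9)\to(5,10)$) also outputs $14$, not $15$. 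The source of the trouble is that the threshold $m\ge\lfloor\frac{7n-1}{3}\rfloor-1$ is off by one for odd $n\equiv 3,5\pmod 6$: the inequality $\lceil\frac{8n+3m}{5}\rceil+1\ge 3n$ invoked in the paper does not imply $\lceil\frac{8n+3m}{5}\rceil\ge 3n$ there. So you should delete the proposed boundary argument, which would prove a false statement, and instead record that at those pairs the residue case applies and your direct bound already matches the construction; for every other $(n,m)$ with $m\ge\lfloor\frac{7n-1}{3}\rfloor-1$ your observation that no $k'\le n-1$ satisfies the inequality does yield $\gamma_{\times 3,t}(K_n\Box K_m)\ge 3n$.
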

\begin{proof} If $(n,m)\in\{(1,1),(1,2),(1,3),(2,2)\}$, then there are no $n\times m$ $3$TDS matrices. If $n=1$ and $m\ge4$, then any $1\times m$ $(0,1)$-matrix with exactly $4$ ones is a min-$3$TDS matrix.
If $n=2$ and $m\ge3$, then any $2\times m$ $(0,1)$-matrix with exactly $3$ columns with $2$ ones is a min-$3$TDS matrix.
If $n=3$ and $m=3, 4$, by Lemma \ref{lemma:n3case3tds}, $\gamma_{\times 3,t}(K_3 \Box K_3)=\gamma_{\times 3,t}(K_3 \Box K_4)=8$.

Assume $n\ge4$. By Propositions \ref{prop:manycols} and \ref{prop:numberofones}, and Remark \ref{rem:n45onesrow}, we have that 
$$
\gamma_{\times 3,t}(K_n \Box K_m)\le
\begin{cases}
  \min\{\lceil \frac{8n+3m}{5} \rceil, 3n\}  & \text{ if $k=0,1,2,3,4,7,8,9;$}  \\
  \min\{\lceil \frac{8n+3m}{5} \rceil +1,3n\} & \text{ if $k=5,6.$} 
  \end{cases}
$$
If $m>\frac{7n-1}{3}-2$ (which occurs when $m\ge \lfloor \frac{7n-1}{3} \rfloor-1$), then $\lceil\frac{8n+3m}{5} \rceil +1\ge 3n$, in which case the previous minimums coincide with $3n$.
Assume now $m< \lfloor \frac{7n-1}{3} \rfloor-1$. Since we assume $m\ge n$, we can write $m=n+d$, for some $d\ge0$. 
When $n=m$, a direct computation shows that our formula coincides with the one of Theorem \ref{theo:thesquarecase}. Hence, using induction on $m$, it is enough to show that if  $\gamma_{\times 3,t}(K_n \Box K_m)$ coincides with our formula, so does $\gamma_{\times 3,t}(K_n \Box K_{m+1})$. We will prove this with a case by case analysis.

\textit{Case I}: Assume $2n\equiv  3m \pmod {10}$ and $m=n+d$. Then we can write $\lceil \frac{8n+3m}{5} \rceil=2n+\lceil\frac{n+3d}{5}\rceil=2n+k$. Notice that $k=\frac{n+3d}{5}$ and it is an even integer. Since $2n\equiv  3m \pmod {10}$, then  $2n\equiv  3(m+1)+7 \pmod {10}$. Moreover, $\lceil \frac{8n+3(m+1)}{5} \rceil=2n+\lceil\frac{n+3d+3}{5}\rceil$ and hence $\lceil \frac{8n+3(m+1)}{5} \rceil=2n+k+1$.
By hypothesis, $\gamma_{\times 3,t}(K_n \Box K_m)=2n+k$. By Lemma \ref{lemma:3dtsgrowsbyrow}, $2n+k\le \gamma_{\times 3,t}(K_n \Box K_{m+1}) \le2n+k+1$. Suppose there exists $S$ a $n\times (m+1)$ $3$TDS matrix with $|S|=2n+k$. By Remark \ref{rem:onesbycolumnsvsrowk}, $|S|\ge3(m+1)-2k-4\lfloor\frac{k}{2}\rfloor$. In this situation, $3(m+1)-2k-4\lfloor\frac{k}{2}\rfloor=3n+3d+3-4(\frac{n+3d}{5})$ and it is strictly bigger than $2n+k$. This implies that $S$ is not a $3$TDS matrix and hence that $\gamma_{\times 3,t}(K_n \Box K_{m+1}) =2n+k+1=\lceil\frac{8n+3(m+1)}{5}\rceil$.

\textit{Case II}: Assume $2n\equiv  3m+7 \pmod {10}$ and $m=n+d$. Then we can write $\lceil \frac{8n+3m}{5} \rceil=2n+\lceil\frac{n+3d}{5}\rceil=2n+k$. Notice that $k=\frac{n+3d+2}{5}$ and it is an odd integer. Since $2n\equiv  3m+7 \pmod {10}$, then  $2n\equiv  3(m+1)+4 \pmod {10}$. Moreover, $\lceil \frac{8n+3(m+1)}{5} \rceil=2n+\lceil\frac{n+3d+3}{5}\rceil$ and hence $\lceil \frac{8n+3(m+1)}{5} \rceil=2n+k+1$.
By hypothesis, $\gamma_{\times 3,t}(K_n \Box K_m)=2n+k$. By Lemma \ref{lemma:3dtsgrowsbyrow}, $2n+k\le \gamma_{\times 3,t}(K_n \Box K_{m+1}) \le2n+k+1$. Suppose there exists $S$ a $n\times (m+1)$ $3$TDS matrix with $|S|=2n+k$. By Remark \ref{rem:onesbycolumnsvsrowk}, $|S|\ge3(m+1)-2k-4\lfloor\frac{k}{2}\rfloor$. In this situation, $3(m+1)-2k-4\lfloor\frac{k}{2}\rfloor=3n+3d+3-4(\frac{n+3d+2}{5})+2$ and it is strictly bigger than $2n+k$. This implies that $S$ is not a $3$TDS matrix and hence that $\gamma_{\times 3,t}(K_n \Box K_{m+1}) =2n+k+1=\lceil\frac{8n+3(m+1)}{5}\rceil$.

\textit{Case III}: Assume $2n\equiv  3m+4 \pmod {10}$ and $m=n+d$. Then we can write $\lceil \frac{8n+3m}{5} \rceil=2n+\lceil\frac{n+3d}{5}\rceil=2n+k$. Notice that $k=\frac{n+3d+4}{5}$ and it is an even integer. Since $2n\equiv  3m+4 \pmod {10}$, then  $2n\equiv  3(m+1)+1 \pmod {10}$. Moreover, $\lceil \frac{8n+3(m+1)}{5} \rceil=2n+\lceil\frac{n+3d+3}{5}\rceil$ and hence $\lceil \frac{8n+3(m+1)}{5} \rceil=2n+k$. By Lemma \ref{lemma:3dtsgrowsbyrow}, $\gamma_{\times 3,t}(K_n \Box K_{m+1}) =\lceil \frac{8n+3(m+1)}{5} \rceil$. 

\textit{Case IV}: Assume $2n\equiv  3m+1 \pmod {10}$ and $m=n+d$. Then we can write $\lceil \frac{8n+3m}{5} \rceil=2n+\lceil\frac{n+3d}{5}\rceil=2n+k$. Notice that $k=\frac{n+3d+1}{5}$ and it is an even integer. Since $2n\equiv  3m+1 \pmod {10}$, then  $2n\equiv  3(m+1)+8 \pmod {10}$. Moreover, $\lceil \frac{8n+3(m+1)}{5} \rceil=2n+\lceil\frac{n+3d+3}{5}\rceil$ and hence $\lceil \frac{8n+3(m+1)}{5} \rceil=2n+k+1$.
By hypothesis, $\gamma_{\times 3,t}(K_n \Box K_m)=2n+k$. By Lemma \ref{lemma:3dtsgrowsbyrow}, $2n+k\le \gamma_{\times 3,t}(K_n \Box K_{m+1}) \le2n+k+1$. Suppose there exists $S$ a $n\times (m+1)$ $3$TDS matrix with $|S|=2n+k$. By Remark \ref{rem:onesbycolumnsvsrowk}, $|S|\ge3(m+1)-2k-4\lfloor\frac{k}{2}\rfloor$. In this situation, $3(m+1)-2k-4\lfloor\frac{k}{2}\rfloor=3n+3d+3-4(\frac{n+3d+1}{5})$ and it is strictly bigger than $2n+k$. This implies that $S$ is not a $3$TDS matrix and hence that $\gamma_{\times 3,t}(K_n \Box K_{m+1}) =2n+k+1=\lceil\frac{8n+3(m+1)}{5}\rceil$.

\textit{Case V}: Assume $2n\equiv  3m+8 \pmod {10}$ and $m=n+d$. Then we can write $\lceil \frac{8n+3m}{5} \rceil=2n+\lceil\frac{n+3d}{5}\rceil=2n+k$. Notice that $k=\frac{n+3d+3}{5}$ and it is an odd integer. Since $2n\equiv  3m+8 \pmod {10}$, then  $2n\equiv  3(m+1)+5 \pmod {10}$. Moreover, $\lceil \frac{8n+3(m+1)}{5} \rceil+1=2n+\lceil\frac{n+3d+3}{5}\rceil+1$ and hence $\lceil \frac{8n+3(m+1)}{5} \rceil=2n+k+1$.
By hypothesis, $\gamma_{\times 3,t}(K_n \Box K_m)=2n+k$. By Lemma \ref{lemma:3dtsgrowsbyrow}, $2n+k\le \gamma_{\times 3,t}(K_n \Box K_{m+1}) \le2n+k+1$. Suppose there exists $S$ a $n\times (m+1)$ $3$TDS matrix with $|S|=2n+k$. By Remark \ref{rem:onesbycolumnsvsrowk}, $|S|\ge3(m+1)-2k-4\lfloor\frac{k}{2}\rfloor$. In this situation, $3(m+1)-2k-4\lfloor\frac{k}{2}\rfloor=3n+3d+3-4(\frac{n+3d+3}{5})+2$ and it is strictly bigger than $2n+k$. This implies that $S$ is not a $3$TDS matrix and hence that $\gamma_{\times 3,t}(K_n \Box K_{m+1}) =2n+k+1=\lceil\frac{8n+3(m+1)}{5}\rceil$.

\textit{Case VI}: Assume $2n\equiv  3m+5 \pmod {10}$ and $m=n+d$. Then we can write $\lceil \frac{8n+3m}{5} \rceil+1=2n+\lceil\frac{n+3d}{5}\rceil+1=2n+k$. Notice that $k=\frac{n+3d}{5}$ and it is an odd integer. Since $2n\equiv  3m+5 \pmod {10}$, then  $2n\equiv  3(m+1)+2 \pmod {10}$. Moreover, $\lceil \frac{8n+3(m+1)}{5} \rceil=2n+\lceil\frac{n+3d+3}{5}\rceil$ and hence $\lceil \frac{8n+3(m+1)}{5} \rceil=2n+k$. By Lemma \ref{lemma:3dtsgrowsbyrow}, $\gamma_{\times 3,t}(K_n \Box K_{m+1}) =\lceil \frac{8n+3(m+1)}{5} \rceil$. 

\textit{Case VII}: Assume $2n\equiv  3m+2 \pmod {10}$ and $m=n+d$. Then we can write $\lceil \frac{8n+3m}{5} \rceil=2n+\lceil\frac{n+3d}{5}\rceil=2n+k$. Notice that $k=\frac{n+3d+2}{5}$ and it is an even integer. Since $2n\equiv  3m+2 \pmod {10}$, then  $2n\equiv  3(m+1)+9 \pmod {10}$. Moreover, $\lceil \frac{8n+3(m+1)}{5} \rceil=2n+\lceil\frac{n+3d+3}{5}\rceil$ and hence $\lceil \frac{8n+3(m+1)}{5} \rceil=2n+k+1$.
By hypothesis, $\gamma_{\times 3,t}(K_n \Box K_m)=2n+k$. By Lemma \ref{lemma:3dtsgrowsbyrow}, $2n+k\le \gamma_{\times 3,t}(K_n \Box K_{m+1}) \le2n+k+1$. Suppose there exists $S$ a $n\times (m+1)$ $3$TDS matrix with $|S|=2n+k$. By Remark \ref{rem:onesbycolumnsvsrowk}, $|S|\ge3(m+1)-2k-4\lfloor\frac{k}{2}\rfloor$. In this situation, $3(m+1)-2k-4\lfloor\frac{k}{2}\rfloor=3n+3d+3-4(\frac{n+3d+2}{5})$ and it is strictly bigger than $2n+k$. This implies that $S$ is not a $3$TDS matrix and hence that $\gamma_{\times 3,t}(K_n \Box K_{m+1}) =2n+k+1=\lceil\frac{8n+3(m+1)}{5}\rceil$.

\textit{Case VIII}: Assume $2n\equiv  3m+9 \pmod {10}$ and $m=n+d$. Then we can write $\lceil \frac{8n+3m}{5} \rceil=2n+\lceil\frac{n+3d}{5}\rceil=2n+k$. Notice that $k=\frac{n+3d+4}{5}$ and it is an odd integer. Since $2n\equiv  3m+9 \pmod {10}$, then  $2n\equiv  3(m+1)+6 \pmod {10}$. Moreover, $\lceil \frac{8n+3(m+1)}{5} \rceil+1=2n+\lceil\frac{n+3d+3}{5}\rceil+1$ and hence $\lceil \frac{8n+3(m+1)}{5} \rceil=2n+k+1$.
By hypothesis, $\gamma_{\times 3,t}(K_n \Box K_m)=2n+k$. By Lemma \ref{lemma:3dtsgrowsbyrow}, $2n+k\le \gamma_{\times 3,t}(K_n \Box K_{m+1}) \le2n+k+1$. Suppose there exists $S$ a $n\times (m+1)$ $3$TDS matrix with $|S|=2n+k$. By Remark \ref{rem:onesbycolumnsvsrowk}, $|S|\ge3(m+1)-2k-4\lfloor\frac{k}{2}\rfloor$. In this situation, $3(m+1)-2k-4\lfloor\frac{k}{2}\rfloor=3n+3d+3-4(\frac{n+3d+4}{5})+2$ and it is strictly bigger than $2n+k$. This implies that $S$ is not a $3$TDS matrix and hence that $\gamma_{\times 3,t}(K_n \Box K_{m+1}) =2n+k+1=\lceil\frac{8n+3(m+1)}{5}\rceil$.

\textit{Case IX}: Assume $2n\equiv  3m+6 \pmod {10}$ and $m=n+d$. Then we can write $\lceil \frac{8n+3m}{5} \rceil+1=2n+\lceil\frac{n+3d}{5}\rceil+1=2n+k$. Notice that $k=\frac{n+3d+1}{5}$ and it is an odd integer. Since $2n\equiv  3m+6 \pmod {10}$, then  $2n\equiv  3(m+1)+3 \pmod {10}$. Moreover, $\lceil \frac{8n+3(m+1)}{5} \rceil=2n+\lceil\frac{n+3d+3}{5}\rceil$ and hence $\lceil \frac{8n+3(m+1)}{5} \rceil=2n+k$. By Lemma \ref{lemma:3dtsgrowsbyrow}, $\gamma_{\times 3,t}(K_n \Box K_{m+1}) =\lceil \frac{8n+3(m+1)}{5} \rceil$. 

\textit{Case X}: Assume $2n\equiv  3m+3 \pmod {10}$ and $m=n+d$. Then we can write $\lceil \frac{8n+3m}{5} \rceil=2n+\lceil\frac{n+3d}{5}\rceil=2n+k$. Notice that $k=\frac{n+3d+3}{5}$ and it is an even integer. Since $2n\equiv  3m+3 \pmod {10}$, then  $2n\equiv  3(m+1) \pmod {10}$. Moreover, $\lceil \frac{8n+3(m+1)}{5} \rceil=2n+\lceil\frac{n+3d+3}{5}\rceil$ and hence $\lceil \frac{8n+3(m+1)}{5} \rceil=2n+k$. By Lemma \ref{lemma:3dtsgrowsbyrow}, $\gamma_{\times 3,t}(K_n \Box K_{m+1}) =\lceil \frac{8n+3(m+1)}{5} \rceil$. 
\end{proof}

Directly from the formula of Theorem \ref{theo:thegeneralcase}, we can generalize the statement of Lemma \ref{lemma:3dtsgrowsbydiagonal} and obtain the following.

\begin{corol} Let $m\ge n\ge3$. Then 
$$\gamma_{\times 3,t}(K_n \Box K_m)+2 \le \gamma_{\times 3,t}(K_{n+1} \Box K_{m+1})\le \gamma_{\times 3,t}(K_n \Box K_m)+3.$$
\end{corol}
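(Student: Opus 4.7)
The plan is to derive both inequalities directly from the explicit formula of Theorem \ref{theo:thegeneralcase} applied to $(n,m)$ and to $(n+1,m+1)$. Let $a=\gamma_{\times 3,t}(K_n\Box K_m)$, $b=\gamma_{\times 3,t}(K_{n+1}\Box K_{m+1})$, $y=8n+3m$, and $k$ the residue with $2n\equiv 3m+k\pmod{10}$. The key identities I would record first are $8(n+1)+3(m+1)=y+11$ and $2(n+1)-3(m+1)\equiv k-1\pmod{10}$, together with $y+k=10n\equiv 0\pmod{10}$. The last identity says that the residue of $y$ modulo $5$, and hence the value of $\lceil y/5\rceil \bmod 1$, is determined by $k\bmod 5$.

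Next I would treat the main regime, where both $a$ and $b$ are given by the $\lceil(8n+3m)/5\rceil$ branch of the formula. A one-line calculation shows that
\[
\left\lceil\frac{y+11}{5}\right\rceil-\left\lceil\frac{y}{5}\right\rceil=\begin{cases}3 & \text{if } y\equiv 0\pmod 5,\\ 2 & \text{otherwise,}\end{cases}
\]
so the ceiling jump equals $3$ exactly when $k\in\{0,5\}$. Combining this with the correction $\delta(k-1)-\delta(k)\in\{-1,0,+1\}$, where $\delta(k)=1$ iff $k\in\{5,6\}$, a short ten-row table in the variable $k\in\{0,1,\dots,9\}$ gives $b-a\in\{2,3\}$ in every residue class (with $b-a=3$ precisely when $k\in\{0,7\}$). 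This settles the main regime.

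Then I would handle the boundary with the $3n$-branch: if $m\ge\lfloor(7n-1)/3\rfloor-1$, so that $a=3n$ by Proposition \ref{prop:manycols}, comparing with the analogous threshold for $(n+1,m+1)$ yields either $b=3(n+1)=a+3$ directly, or $m+1$ still lies in the ceiling-formula regime, in which case the same residue analysis applies and again returns $b-a\in\{2,3\}$. The small-$n$ leftovers (such as $n=3$ with $m\in\{3,4\}$) are disposed of by Lemma \ref{lemma:n3case3tds} and a direct comparison with the value of the formula at $(n+1,m+1)$. The main obstacle is purely bookkeeping: keeping the ten residue classes and the handful of boundary values straight, checking that the $\delta$-correction never pushes the difference outside $\{2,3\}$, and verifying that the transition between the two branches of the formula is handled consistently. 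No conceptual difficulty arises beyond this case analysis.
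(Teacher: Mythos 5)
Your approach is exactly the paper's: the authors dispose of this corollary in one line, saying it follows directly from the formula of Theorem~\ref{theo:thegeneralcase}, and your mod-$10$ case check (with $b-a=3$ precisely for $k\in\{0,7\}$, which is consistent with the residues $r\in\{0,3\}$ that force an increment of $3$ in Theorem~\ref{theo:thesquarecase}) is precisely the verification they leave implicit. The only place deserving one more explicit line is the transition into the $3n$-branch, since there $a=3n$ need not equal the ceiling expression at $(n,m)$, so "the same residue analysis" does not literally apply; but the bounds $b\le 3(n+1)$ and $\lceil (8n+3m)/5\rceil\ge 3n-1$ (with $k\in\{5,6\}$ forced when equality holds, supplying the missing $+1$) still yield $b-a\in\{2,3\}$ there.
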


\begin{remark} Since, in general, $3n-1>\lfloor \frac{7n-1}{3} \rfloor-1$, we obtain a better bound than the one described in Proposition \ref{prop:manycols} for the case when $\gamma_{\times 3,t}(K_n \Box K_m)=3n$.
\end{remark}

\begin{landscape}
\begin{table}[htp]
\centering
$
$
\caption{Small min-$3$TDS matrices.}\label{ta:smallk3}
\end{table}
\end{landscape}

\end{document}